\def \H{{\mathcal H}}
\def \K {{\mathcal K}}
\def \I {{\mathbf I}}
\def \t^0 {{\mathbf t^0}}
\def \C {{\mathbb C}}
\def \A {{\mathbf{A}}}
\def \X {{\mathbf X}}
\def \M {{\mathbb M}}
\def \tr {{\operatorname{tr}}}
\def \Re {{\operatorname{Re}}}
\newtheorem{theorem}{Theorem}[section]
\newtheorem{lemma}[theorem]{Lemma}
\theoremstyle{definition}
\newtheorem{corollary}[theorem]{Corollary}
\newtheorem{proposition}[theorem]{Proposition}
\numberwithin{equation}{section}
\newcommand{\beqa}{\begin{eqnarray*}}
\newcommand{\eeqa}{\end{eqnarray*}}
\newcommand{\beqn}{\begin{eqnarray}}
\newcommand{\eeqn}{\end{eqnarray}}
\newcommand{\N}{I\!\!N}
\newcommand{\ph}{\phi}
\newcommand{\la}{\lambda}
\newcounter{cnt1}
\newcounter{cnt2}
\newcounter{cnt3}
\newcommand{\blr}{\begin{list}{$($\roman{cnt1}$)$}
        {\usecounter{cnt1} \setlength{\topsep}{0pt}
                \setlength{\itemsep}{0pt}}}
\newcommand{\bla}{\begin{list}{$($\alph{cnt2}$)$}
        {\usecounter{cnt2} \setlength{\topsep}{0pt}
                \setlength{\itemsep}{0pt}}}
\newcommand{\bln}{\begin{list}{$($\arabic{cnt3}$)$}
        {\usecounter{cnt3} \setlength{\topsep}{0pt}
                \setlength{\itemsep}{0pt}}}
\newcommand{\el}{\end{list}}
\newtheorem{thm}{Theorem}
\newtheorem{Def}[thm]{Definition}
\newtheorem{rem}[thm]{Remark}
\newcommand{\Rem}{\begin{rem} \rm}
\newcommand{\bdfn}{\begin{Def} \rm}
\newcommand{\edfn}{\end{Def}}
\title[Subdifferential of the $\mathcal{B(H,K)}$ norm, and approximate orthogonality]{Subdifferential of the $\mathcal{B(H,K)}$ norm, and approximate orthogonality}
\author[ P. Grover, \ K. K. Gupta, \ S. Seal ]
{Priyanka Grover$^{1}$, Krishna Kumar Gupta$^{2}$, Susmita Seal$^{3}$ }
\address{{$^{1}$}   Priyanka Grover,
	Department of Mathematics, 
	Shiv Nadar Institution of Eminence Delhi NCR, NH-91, Tehsil Dadri, Uttar Pradesh 201314, India 
}
\email{priyanka.grover@snu.edu.in}
\address{{$^{2}$}   Krishna Kumar Gupta,
	Department of Mathematics, 
	Shiv Nadar Institution of Eminence Delhi NCR, NH-91, Tehsil Dadri, Uttar Pradesh 201314, India 
}
\email{kg952@snu.edu.in, shrikrishna6996@gmail.com}
\address {{$^{3}$} Susmita Seal, 
	School of Mathematical Sciences, National Institute of Science Educational and Research Bhubaneswar, An OCC of Homi Bhabha National Institute, P.O. - Jatni, District - Khurda, Odisha - 752050, India}
\email{susmitaseal1996@gmail.com, susmitasealmath@niser.ac.in}
\subjclass{ 15A60, 47L10, 46G05, 46B20, 47A30}
\keywords{ Normed spaces, Linear
	operators, Compact operators, Subdifferential, Norm derivative, Maximal numerical range, Birkhoff-James orthogonality, Approximate orthogonality}
\date{}
\begin{document}

	\begin{abstract}
		We present an expression for the right hand derivative of the $\mathcal{B(H,K)}$ norm generalizing the result for $\K=\H$  in [D. J. Ke$\check{\mathrm{c}}$ki$\grave{\mathrm{c}}$, {\it Gateaux derivative of $B(H)$ norm}, Proc. Amer. Math. Soc. {\bf 133} (2005): 2061--2067]. Using this, we obtain the subdifferential of the $\mathcal{B(H, K)}$ norm. For tuples of operators $\A,\X\in$ $\mathcal{B(H, H}^d)$, we give a characterization for $\boldsymbol 0$ to be a best approximation to the subspace $\mathbb C^d \X$, generalizing a similar result for $\mathbb C^d \I$ in [P. Grover, S. Singla, {\it A distance formula for tuples of operators}, Linear Algebra Appl. {\bf 650} (2022): 267--285]. We define the concept of $\epsilon$-Birkhoff orthogonality to a subspace in a general normed space and derive a characterization in terms of the subdifferential set. Using this, we deduce interesting results for $A\in \mathcal{B(H,K)}$ to be $\epsilon$-Birkhoff orthogonal to a subspace of $\mathcal{B(H,K)}$, when $A$ is compact.
		
	\end{abstract}
	\maketitle


\section{Introduction}

Let $(\mathcal{X}, \|\cdot\|)$ be a normed space.
 The right hand derivative of $\|\cdot\|$ at $a\in \mathcal X$ is given by \begin{equation}\label{rd}\|a\|^{'}_{+}(x)=\lim\limits_{t\rightarrow 0^+} \frac{\|a+tx\| -\|a\|}{t}, \quad x \in \mathcal{X}.\end{equation}
 Let $\mathcal{H}$ be a complex Hilbert space.  Let $\mathcal{B(H)}$ denote the space of bounded linear operators from $\mathcal{H}$ to $\mathcal{H}.$ The expression for the right hand derivative of the $\mathcal{B(H)}$ norm, $\|\cdot\|^{'}_{+}$, was derived in \cite{K}. It was then used to derive characterization of \emph{Birkhoff-James orthogonality}  in $\mathcal B( H)$. Recently, properties of norm derivatives have been used as sharp tools in \cite{enderami, ks, sain, sain2}, to name a few. Finding exact expressions of $\|\cdot\|^{'}_+$ for various norms is not a trivial task, and it has been a subject of interest. For matrix norms, this has been done in \cite{G, GS2, W}. In $C^*$-algebras, an expression is given in \cite{SsinglaC*}. In any normed space $\mathcal X$, the right hand derivative serves as the support functional for the \emph{subdifferential set}, that is, for $a\in \mathcal X$, the subdifferential set of $\|\cdot\|$ at $a$ is given by $$\partial \|a\|=\{x^* \in \mathcal{X}^*:\  \mathrm{Re} \ x^*(x) \leqslant \|a\|^{'}_{+}(x) \  \text{ for every }  x\in \mathcal{X}\}.
$$ It is also same as $\partial \|a\|=\{f\in \mathcal X^*: \|f\|=1, f(a)=\|a\|\},$ for $a\neq 0.$ Characterizations of subdifferential sets of matrix norms have been done in \cite{G1, G, GS2, GKpkNorm, W, W1, Z, Z1, Z2}. This concept has been applied to approximation problems in \cite{Singer}, and to Birkhoff-James orthogonality in \cite{GBh, G1, GSubspace,  GS2}. We would like to emphasize that this approach has yielded stronger results in the past, for example compare \cite[Corollary 1.1]{GS2} and \cite[Theorem 2.11]{MP}, where the latter gives a sufficiency result but using subdifferential, necessary part is also obtained in \cite{GS2}. (Also see \cite{GSubspace}.) We refer the readers to the survey \cite{SGsurvey} for more insights. For a recent usage in minimal compact operators of this approach, see \cite{Bot}. In variational analysis, subdifferential set is a key ingredient (see \cite{ding, MH}). 

Let $\mathscr{K}(\mathcal{H})$ denote the space of compact operators from $\H$ to $\H.$ For $T \in \mathcal{B(H)}$, if\\ $\operatorname{dist}(T,\mathscr{K}(\mathcal{H}))<\|T\|$, the subdifferential of the $\mathcal{B(H)}$ norm at $T$ is studied in \cite{SsinglaC*}. Extending these results, in \cite{R},  for a Banach space $\mathcal{X}$, the subdifferential of the norm at $x\in \mathcal X$ has been discussed, under the condition that $\operatorname{dist}(x,J)<1$ for some $M$-ideal $J \subset \mathcal{X}$. Let $\mathcal {B(H,K)}$ denote the space of all bounded operators from Hilbert space $\H$ to another Hilbert space $\K.$ We give a characterization of the subdifferential of the operator norm on $\mathcal {B(H,K)}$ at $A\in \mathcal B(\mathcal H, \mathcal K)$ without any condition on $A$.
 To achieve this, we first need to establish an explicit expression for $\|A\|^{'}_{+}$, the right hand  derivative of $\|\cdot\|$ at $A\in\mathcal{B(H,K)}$, which we do in Theorem \ref{D2}. 

 For $d\in\N,$ let $\mathcal{H}^d$ be the direct sum of $d $ copies of the Hilbert space $\mathcal{H}$, equipped with the $\ell_2$-norm. For $\mathcal{K}=\mathcal{H}^d,$ let $\mathcal{B(H, H}^d)$ be the space of bounded linear operators from $\mathcal{H}$ to $\mathcal{H}^d$. Given elements $A_1,\ldots, A_d\in \mathcal{B(H)}$, we define $\A=(A_1,\hdots, A_d) \in \mathcal{B(H,H}^d)$
 such that, 
 for $\phi\in\mathcal{H},$ $\A\phi= (A_1\phi,\ldots,A_d\phi)$. 
 Let \(\mathbf{I}\) denote the tuple of identity operators $(I, \dots, I)\in\mathcal{B(H,H}^d).$ For $\boldsymbol{\lambda} = (\la_1, \dots, \la_d) \in \mathbb{C}^d,$ we define $\boldsymbol{\la I}$ as the tuple $(\la_1 I, \dots, \la_d I).$ The distance of $\mathbf{A}$ from $\mathbb{C}^d \mathbf{I}$ is given by  
 $ \operatorname{dist}(\mathbf{A}, \mathbb{C}^d \mathbf{I}) = \min\limits_{\boldsymbol{\la} \in \mathbb{C}^d} \|\mathbf{A} - \boldsymbol{\la}\mathbf{I}\|.$ In \cite{GS}, some conditions are considered as to when we have $\mathrm{dist} (\A, \mathbb{C}^d \mathbf{I})=\|\A\|.$ In Theorem \ref{ap1}, leveraging the subdifferential of $\mathcal{B(\mathcal H, \mathcal H}^d)$ norm,  we establish equivalent conditions for $\mathrm{dist} (\A, \mathbb{C}^d \mathbf{X})=\|\A\|.$
 
 For $x,y\in\mathcal{X},$ if $\mathrm{dist}(x,\mathrm{span}\{y\})=\|x\|$, then we say $x$ is \emph{Birkhoff-James orthogonal} to $y$ \cite{B,J}.   Subsequently, concepts of approximate orthogonality  were introduced in \cite{C,D}. In \cite{C}, the notion of \emph{$\epsilon$-Birkhoff orthogonality} was introduced as follows.  
  For $x,y\in\mathcal{X}$ and $\epsilon \in [0,1)$, $x$ is said to be $\epsilon$-Birkhoff orthogonal to $y$ if $\|x+\lambda y\|^2\geqslant \|x\|^2-2\epsilon\|x\| \|\lambda y\| \ \text{for all }  \lambda \in \C.$ We denote this relation by  $x\perp_B^\epsilon y.$ A characterization in the space of bounded linear operators defined on real normed spaces is given in \cite{PSM}. For finite dimensional case of real Hilbert spaces, it was obtained in \cite{C1}. Much recently,  characterizations were given for general normed spaces in \cite{ACCF} in terms of norm derivatives. Some characterizations for $\K=\H$, that is, $\mathcal {B(H)}$, are pointed out as special cases in \cite{C1} and \cite{PSM}. Using the right-hand derivative of the $\mathcal{B(H,K)}$ norm, Theorem \ref{epsilon_in_Bhk} gives a characterization of $\epsilon$-Birkhoff orthogonality in $\mathcal{B(H,K)}$. In \cite{PSM}, $\epsilon$-Birkhoff orthogonality for $\mathscr{K} (\mathcal{X},\mathcal{Y})$, the space of compact operators from a real normed space $\mathcal X$ to another real normed space $\mathcal Y$ has been discussed. In Theorem \ref{epsilonBirkChara 1}, for complex Hilbert spaces, we characterize the $\epsilon$-Birkhoff orthogonality of a compact operator $A$ with respect to an arbitrary bounded operator $B \in \mathcal{B(H,K)}$.

  We extend the notion of $\epsilon$-Birkhoff orthogonality to a subspace of a normed space $\mathcal{X}.$ Let $\mathcal{W}$ be a subspace of $\mathcal{X}.$ For $x\in\mathcal{X},$ we say $x$ is $\epsilon$-Birkhoff orthogonal to $\mathcal{W}$ if 
  \begin{equation}\label{sub_ortho_def}\|x+w\|^2\geqslant \|x\|^2-2\epsilon \|x\|\|w\|\ \text{ for all }w\in\mathcal{W}.
  	\end{equation} We denote it as $x\perp_B^\epsilon \mathcal{W}.$ In Theorem \ref{subspace_epsilon1}, we give a characterization of $\epsilon$-Birkhoff orthogonality to a subspace, analogous to \cite{C1,PW4} in terms of the subdifferential set. We also present a characterization of $\epsilon$-Birkhoff orthogonality of a compact operator to a subspace in $\mathcal{B(H,K)}.$

In Section 2, we obtain the explicit expression for $\|A\|^{'}_{+}$ and subsequently derive the subdifferential set of $\|\cdot\|$ at $A\in\mathcal{B(H,K)}.$ 
In Section 3, we provide a characterization for $\boldsymbol{0}$ to be a best approximation to the subspace $\mathbb C^d \X$ in $\mathcal B(\mathcal H, \mathcal H^d).$  
 In Section 4, as an application of the subdifferential set, we provide a characterization of \(\epsilon\)-Birkhoff orthogonality to a subspace of a general normed space. We also present some results that characterize $\epsilon$-Birkhoff orthogonality in $\mathcal{B(H,K)}.$

\section{Subdifferential and right hand derivative of $\mathcal{B(H, H}^d)$ norm}

 Before presenting our main result, we introduce the following notation. Let $E_{A^*A}$ be the spectral measure of the operator $A^*A.$
For $\delta >0$ and $A\in \mathcal{B(H,K)}$, we define $H_{\delta}(A) := E_{A^*A}\left[\|{A}\|^2-\delta, \|{A}\|^2\right].$
 If $A\in \mathcal{B(H)}$ is self adjoint, we set $\widetilde{H_{\delta}(A)} := E_{A}[\|A\|-\delta, \|A\|]$, where $E_{A}$ is the spectral measure of the operator $A$.

For a nonzero operator $A\in \mathcal{B(H,K)}$, we define the set $\Lambda (A) :=\{\Gamma=(\phi_{n})_{n} : \phi_n\in\mathcal{H},\|\phi_{n}\|=1\text{ for all }n\in\mathbb{N}\ \mathrm{with} \ \|A\phi_{n}\|\rightarrow \|A\|\}.$
Additionally, let $\mathcal{G}$ denote the set of \emph{Banach limits} on the space $\ell^{\infty},$ the space of all complex valued bounded sequences. For each $\Gamma \in \Lambda (A)$ and $g\in\mathcal{G},$ we define the function  $f_{g,\Gamma} : \mathcal{B(H,K)}\rightarrow \mathbb{C}$ as
 $$f_{g,\Gamma} (X)=g\left( \left\{\frac{ \langle X\phi_{n},\ A\phi_{n}\rangle}{\|A\|}\right\}\right) \quad \text{ for all } X\in \mathcal{B(H, K)}.$$

We now derive an expression for the right hand derivative for a operator in $\mathcal{B}(\H,\K).$

\begin{lemma}\label{K2}
\cite{K}
Let $X$, $Y$ and $Z$ be self adjoint operators in $\mathcal{B(H)}$ such that $X$ and $Z$ are positive. 
Then for all $\delta>0,$
$$\lim\limits_{t\rightarrow 0^+} \frac{\|X+tY+t^2Z\| - \|X\|}{t} \leqslant \sup\limits_{\substack{\phi \in \widetilde{H_{\delta}(X)} \\ \|\phi\|=1}} \langle Y\phi,\phi\rangle.$$ 
\end{lemma}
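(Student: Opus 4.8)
The plan is to convert the operator norm into a variational quantity over unit vectors and then exploit the spectral gap created by $\widetilde{H_{\delta}(X)}$ to annihilate the off-diagonal coupling at first order in $t$. Write $S_t := X + tY + t^2 Z$, which is self-adjoint for real $t$. Since $X \geq 0$ and we may assume $\|X\| > 0$ (the case of interest), a short spectral-perturbation estimate shows that for all sufficiently small $t > 0$ the top of the spectrum of $S_t$ stays near $\|X\| > 0$ while the bottom stays near $\lambda_{\min}(X) \geq 0$; hence $\|S_t\| = \lambda_{\max}(S_t) = \sup_{\|\phi\|=1}\langle S_t\phi,\phi\rangle$. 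This first step reduces the claim to an upper bound on $\langle S_t\phi,\phi\rangle$ that is uniform over unit vectors $\phi$.

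Next I would set $P := \widetilde{H_{\delta}(X)}$ and split $\mathcal{H} = \operatorname{ran}(P)\oplus\ker(P)$, writing $\phi = \phi_1 + \phi_2$ and $s := \|\phi_2\|$. Because $X$ commutes with $P$, the spectral theorem supplies the two estimates that drive everything: \[ \langle X\phi,\phi\rangle \leq \|X\|\,\|\phi_1\|^2 + (\|X\|-\delta)\,\|\phi_2\|^2 = \|X\| - \delta s^2, \] together with $\langle Y\phi_1,\phi_1\rangle \leq M\|\phi_1\|^2$, where $M := \sup\{\langle Y\psi,\psi\rangle : \psi\in\operatorname{ran}(P),\ \|\psi\|=1\}$ is exactly the right-hand side of the asserted inequality. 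Expanding $\langle S_t\phi,\phi\rangle$, bounding the cross terms by $|2t\Re\langle Y\phi_1,\phi_2\rangle| + |t\langle Y\phi_2,\phi_2\rangle| \leq 2t\|Y\|s + t\|Y\|s^2$ and the quadratic term by $t^2\langle Z\phi,\phi\rangle \leq t^2\|Z\|$, I arrive at \[ \langle S_t\phi,\phi\rangle \leq \|X\| + tM + \big(t\|Y\| - tM - \delta\big)s^2 + 2t\|Y\|\,s + t^2\|Z\|. \]

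The decisive step is the quadratic in $s$. For $t$ small the coefficient $t\|Y\| - tM - \delta$ is negative — this is precisely where the gap $\delta$ enters — so the $s$-dependent part is a downward parabola whose maximum over $s$ equals $(t\|Y\|)^2/\big(\delta - t(\|Y\|-M)\big) = O(t^2)$. Hence $\langle S_t\phi,\phi\rangle \leq \|X\| + tM + O(t^2)$ uniformly in $\phi$, so $\|S_t\| \leq \|X\| + tM + O(t^2)$; dividing by $t$ and letting $t\to 0^+$ yields $\lim_{t\to 0^+}(\|S_t\|-\|X\|)/t \leq M$, as claimed. I expect the main obstacle to be exactly this second-order control of the coupling between $\operatorname{ran}(P)$ and $\ker(P)$: the off-diagonal blocks of $Y$ are only $O(1)$, yet completing the square shows the spectral gap forces their net contribution into order $t^2$, so they disappear in the derivative. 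The only other point needing care is the reduction $\|S_t\| = \lambda_{\max}(S_t)$, valid only for small $t$ and relying on $X \geq 0$ so that the negative part of the spectrum cannot overtake the norm.
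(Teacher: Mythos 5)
Your proof is correct. Note that the paper itself contains no proof of this lemma --- it is imported verbatim from Ke\v{c}ki\'{c} \cite{K} --- and your argument is essentially the standard one from that source: reduce $\|S_t\|$ to $\lambda_{\max}(S_t)=\sup_{\|\phi\|=1}\langle S_t\phi,\phi\rangle$ for small $t$, split a unit vector along the spectral projection onto $\widetilde{H_{\delta}(X)}$, use $\langle X\phi,\phi\rangle\leqslant \|X\|-\delta s^2$ with $s=\|\phi_2\|$, bound the off-diagonal coupling by $2t\|Y\|s+t\|Y\|s^2$, and complete the square so that the gap $\delta$ compresses the coupling to $O(t^2)$ uniformly in $\phi$. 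All the estimates check out: the coefficient $t\|Y\|-tM-\delta$ is indeed negative for small $t$ (even when $M<0$, since $M\geqslant -\|Y\|$ and the $-\delta$ dominates), the parabola maximum $(t\|Y\|)^2/\bigl(\delta-t(\|Y\|-M)\bigr)$ is $O(t^2)$, and $\widetilde{H_{\delta}(X)}\neq\{0\}$ because $\|X\|\in\operatorname{spec}(X)$ for $X\geqslant 0$, so $M$ is well defined.

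Two remarks, both to your credit. First, your standing assumption $\|X\|>0$ is not cosmetic: for $X=0$ one has $\widetilde{H_{\delta}(X)}=\mathcal{H}$, and taking $Y=-I$, $Z=0$ gives left side $1$ and right side $-1$, so the lemma as stated is false without it; in the paper's application $X=A^*A$ with $A\neq 0$, so the restriction is harmless, and likewise your care over $\|S_t\|=\lambda_{\max}(S_t)$ (which uses $X\geqslant 0$ to keep the negative part of the spectrum from overtaking the norm) is a genuine step that the bare statement glosses over. Second, a hair: your argument establishes $\limsup_{t\to 0^+}(\|S_t\|-\|X\|)/t\leqslant M$ rather than a statement about a limit known to exist; this is exactly what is needed, since where the lemma is used (Theorem \ref{D2}) the existence of the relevant limit comes from convexity of $t\mapsto\|A+tX\|$, not from this lemma.
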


\begin{theorem}\label{D2}
Let $A, X\in \mathcal{B(H,K)}$ with $A\neq 0$. Then 
$$\lim\limits_{t\rightarrow 0^+} \frac{\|A+tX\| -\|A\|}{t}= \frac{1}{\|A\|}\inf\limits_{\delta >0}\sup\limits_{\substack{\phi\in H_{\delta}(A)\\ \|\phi\|=1}}  \ \mathrm{Re} \langle X\phi,\ A\phi\rangle.$$
\end{theorem}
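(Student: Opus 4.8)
The plan is to reduce everything to the self-adjoint situation of Lemma \ref{K2} by passing to $A^*A$, and then to prove the two matching inequalities separately, with the lower bound requiring the more delicate argument.

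First I would write $\|A+tX\|^2 = \|(A+tX)^*(A+tX)\| = \|P + tQ + t^2 R\|$, where $P := A^*A$ and $R := X^*X$ are positive and $Q := A^*X + X^*A$ is self-adjoint. Since $t \mapsto \|A+tX\|$ is convex, its right-hand derivative $D$ exists; factoring $\|A+tX\|^2 - \|A\|^2 = (\|A+tX\|-\|A\|)(\|A+tX\|+\|A\|)$ and using $\|A+tX\| \to \|A\|$ gives
$$\lim_{t\to 0^+}\frac{\|P+tQ+t^2R\|-\|P\|}{t} = 2\|A\|\,D.$$
I would then record two identities that make the translation transparent: since $\|P\| = \|A\|^2$, the projection $\widetilde{H_\delta(P)}$ coincides with $H_\delta(A)$, and for any unit vector $\phi$ one computes $\langle Q\phi, \phi\rangle = \langle X\phi,A\phi\rangle + \overline{\langle X\phi,A\phi\rangle} = 2\,\mathrm{Re}\langle X\phi, A\phi\rangle$.

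For the upper bound I would apply Lemma \ref{K2} with $(X,Y,Z)$ replaced by $(P,Q,R)$; combined with the two identities above and division by $2\|A\|$, this yields $D \leq \frac{1}{\|A\|}\,m(\delta)$ for every $\delta > 0$, where $m(\delta) := \sup\{\mathrm{Re}\langle X\phi, A\phi\rangle : \phi \in H_\delta(A),\ \|\phi\|=1\}$. Taking the infimum over $\delta$ gives one of the two required inequalities.

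For the lower bound I would test against unit vectors lying in the spectral subspace. For a unit $\phi \in H_\delta(A)$, the elementary estimate $\|A+tX\|^2 \geq \|(A+tX)\phi\|^2 = \|A\phi\|^2 + 2t\,\mathrm{Re}\langle X\phi, A\phi\rangle + t^2\|X\phi\|^2$, together with $\|A\phi\|^2 = \langle A^*A\phi,\phi\rangle \geq \|A\|^2 - \delta$ (valid because $\phi$ lies in the range of $E_{A^*A}[\|A\|^2-\delta, \|A\|^2]$), gives $\|A+tX\|^2 - \|A\|^2 \geq -\delta + 2t\,\mathrm{Re}\langle X\phi, A\phi\rangle$. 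The key difficulty, and the main obstacle of the whole proof, is that a fixed $\delta$ leaves an error term $-\delta/t$ that blows up as $t \to 0^+$; to absorb it I would let $\delta$ depend on $t$, taking $\delta(t) = t^{3/2}$ (which is legitimate since $\|A\|^2 \in \mathrm{spec}(A^*A)$ forces $H_\delta(A) \neq \{0\}$ for every $\delta > 0$) and choosing $\phi_t \in H_{\delta(t)}(A)$ that realizes $m(\delta(t))$ up to an error $\delta(t)$. Then $\delta(t)/t = t^{1/2} \to 0$, while the monotonicity of $m(\delta)$ in $\delta$ gives $m(\delta(t)) \to \inf_{\delta>0} m(\delta)$; dividing by $t(\|A+tX\|+\|A\|)$ and letting $t \to 0^+$ produces $D \geq \frac{1}{\|A\|}\inf_{\delta>0} m(\delta)$, which matches the upper bound and completes the proof.
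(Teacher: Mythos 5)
Your proposal is correct and takes essentially the same approach as the paper: the same reduction to $\|A^*A+t(X^*A+A^*X)+t^2X^*X\|$, the upper bound via Lemma \ref{K2} with the identifications $\widetilde{H_\delta(A^*A)}=H_\delta(A)$ and $\langle (X^*A+A^*X)\phi,\phi\rangle=2\,\mathrm{Re}\langle X\phi,A\phi\rangle$, and near-maximizing unit vectors in $H_\delta(A)$ for the lower bound. The only (immaterial) difference is in the limit bookkeeping: the paper keeps $t$ fixed and takes $\liminf_{\delta\to 0^+}$ first, so the error term $\frac{1}{t}\bigl(\langle A^*A\phi_\delta,\phi_\delta\rangle-\|A\|^2\bigr)$ vanishes before $t\to 0^+$, whereas you couple $\delta(t)=t^{3/2}$ into a single limit; both handle the $-\delta/t$ error correctly.
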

\begin{proof}
We begin by noting that
\begin{align}
\frac{\|A+tX\|-\|A\|}{t} & =  \frac{\|A+tX\|^2-\|A\|^2}{t (\|A+tX\|+\|A\|)}\nonumber\\
&= \frac{\| (A+tX)^*(A+tX)\|-\|A\|^2}{t (\|A+tX\|+\|A\|)} \nonumber\\
&= \frac{\|A^* A +t(X^*A+A^*X)+ t^2 X^*X\|-\|A\|^2}{t (\|A+tX\|+\|A\|)}.\label{eq1}
\end{align}
Applying Lemma $\ref{K2}$, for each $\delta >0$, we obtain
\begin{equation}\notag
\begin{aligned}
\lim\limits_{t\rightarrow 0^+} \frac{\|A+tX\| -\|A\|}{t}  \leqslant 
\frac{1}{2\|A\|} \sup\limits_{\substack{\phi\in H_{\delta}(A)\\ \|\phi\|=1}} \ \langle (X^*A+A^*X)\phi, \phi\rangle
 =\frac{1}{\|A\|} \sup\limits_{\substack{\phi\in H_{\delta}(A)\\ \|\phi\|=1}} \  \mathrm{Re} \langle X\phi, A\phi\rangle.
\end{aligned}
\end{equation}
Taking the infimum over  $\delta >0$ yields
\begin{equation}\label{eq2}
\lim\limits_{t\rightarrow 0^+} \frac{\|A+tX\| -\|A\|}{t}\leqslant \frac{1}{\|A\|}\inf\limits_{\delta >0}\sup\limits_{\substack{\phi\in H_{\delta}(A) \\ \|\phi\|=1}} \ \mathrm{Re} \langle X\phi, \ A\phi\rangle. 
\end{equation}
For the reverse inequality, let $\delta >0$. Choose $\phi_{\delta}\in H_{\delta}(A)$ with $\|\phi_{\delta}\|=1$ such that 
$$ \mathrm{Re}  \langle X\phi_{\delta},\ A\phi_{\delta}\rangle \geqslant \sup\limits_{\substack{\phi\in H_{\delta}(A) \\ \|\phi\|=1}} \  \mathrm{Re} \langle X\phi,\ A\phi\rangle  -\delta.$$
Also $\lim\limits_{\delta\rightarrow0^+} \langle \Big( A^* A\Big)\phi_{\delta}, \phi_{\delta}\rangle = \|A\|^2$. Hence, from ($\ref{eq1}$),

\begin{equation}\notag
	\begin{aligned}
		\frac{\|A+tX\| - \|A\|}{t} 
		&\geqslant \frac{1}{\|A+tX\|+\|A\|} \Bigg( \frac{1}{t} \Big( \langle  A^* A  \phi_{\delta}, \phi_{\delta} \rangle - \|A\|^2 \Big) 
		+ 2\ \Re \langle X \phi_{\delta}, A \phi_{\delta} \rangle \\
		&\hspace{6cm} + t \langle   X^* X  \phi_{\delta}, \phi_{\delta} \rangle \Bigg) \\
		&\geqslant \frac{1}{\|A+tX\|+\|A\|} \Bigg( \frac{1}{t} \Big( \langle  A^* A  \phi_{\delta}, \phi_{\delta} \rangle - \|A\|^2 \Big) \\
		&\hspace{2cm} + 2 \sup_{\substack{\phi\in H_{\delta}(A) \\ \|\phi\|=1}}  \mathrm{Re} \langle X \phi, A \phi \rangle 
		- 2\delta + t \langle  X^* X \phi_{\delta}, \phi_{\delta} \rangle \Bigg).
	\end{aligned}
\end{equation}

By taking $\liminf\limits_{\delta\rightarrow 0+}$, we get
\begin{equation}
\begin{aligned}
\frac{\|A+tX\| -\|A\|}{t} \geqslant \frac{1}{\|A+tX\|+\|A\|} \Bigg (2 \inf\limits_{\delta >0} \sup\limits_{\substack{\phi\in H_{\delta}(A)\\ \|\phi\|=1}} \ \mathrm{Re} \langle X\phi,\ A\phi\rangle 
 + t \liminf\limits_{\delta\rightarrow 0+} \langle X^*X\phi_{\delta}, \phi_{\delta}\rangle \Bigg).
\end{aligned}
\end{equation}

Therefore, 
\begin{equation}\label{eq3}
\begin{split}
\lim\limits_{t\rightarrow 0^+} \frac{\|A+tX\| -\|A\|}{t} \geqslant \frac{1}{\|A\|}\inf\limits_{\delta >0}\sup\limits_{\substack{\phi\in H_{\delta}(A) \\ \|\phi\|=1}} \ \mathrm{Re} \langle X\phi,\ A\phi\rangle. 
\end{split}
\end{equation}
By combining ($\ref{eq2}$) and ($\ref{eq3}$), the proof is completed.
\end{proof}

\begin{corollary}\label{finite_righhand_1}
 Let $A,X \in\mathcal{B(H,K)}$ be such that $A$ is compact operator. Then $$\lim\limits_{t\rightarrow 0^+} \frac{\|A+tX\| -\|A\|}{t}=\frac{1}{\|A\|}\max\limits_{\substack{\phi\in \mathcal{H},\|\phi\|=1,\\ A^*A \phi=\|A\|^2\phi}}\ \mathrm{Re} \langle X\phi,\ A\phi\rangle.$$
\end{corollary}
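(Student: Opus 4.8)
The plan is to derive Corollary \ref{finite_righhand_1} directly from Theorem \ref{D2} by exploiting the discreteness of the spectrum of $A^*A$ near its top. Since $A$ is compact, the positive operator $A^*A$ is compact and self-adjoint, and $\|A^*A\| = \|A\|^2$ is the largest point of its spectrum. For a compact self-adjoint operator every nonzero spectral value is an isolated eigenvalue of finite multiplicity; as $A \neq 0$ we have $\|A\|^2 > 0$, so $\|A\|^2$ is in particular an isolated eigenvalue. I would then set $V := \ker(A^*A - \|A\|^2 I)$, which is finite-dimensional, and record that every unit vector $\phi \in V$ satisfies $\|A\phi\|^2 = \langle A^*A\phi, \phi\rangle = \|A\|^2$, so that the constraint set in the corollary is exactly the unit sphere of $V$.

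The key step is to show that the spectral projections $H_{\delta}(A)$ stabilize for small $\delta$. Because $\|A\|^2$ is isolated in $\sigma(A^*A)$, there is $\delta_0 > 0$ with $\sigma(A^*A) \cap [\|A\|^2 - \delta_0, \|A\|^2] = \{\|A\|^2\}$. Hence for every $0 < \delta < \delta_0$ the spectral measure gives $H_{\delta}(A) = E_{A^*A}[\|A\|^2 - \delta, \|A\|^2] = E_{A^*A}\{\|A\|^2\}$, whose range is precisely $V$. Consequently, for all such $\delta$ the inner supremum appearing in Theorem \ref{D2} is taken over $\{\phi : A^*A\phi = \|A\|^2\phi,\ \|\phi\|=1\}$, independently of $\delta$.

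With this reduction in hand the conclusion follows from elementary compactness. The map $\phi \mapsto \mathrm{Re}\langle X\phi, A\phi\rangle$ is continuous, and the unit sphere of the finite-dimensional space $V$ is compact, so the supremum over it is attained and may be written as a maximum. Since the inner supremum is constant (equal to this maximum) on the interval $(0,\delta_0)$, and since the supremum is monotone nondecreasing in $\delta$ (larger $\delta$ enlarges $H_{\delta}(A)$), the infimum over all $\delta > 0$ equals this common value. Substituting into the formula of Theorem \ref{D2} therefore collapses the nested $\inf_{\delta}\sup_{\phi}$ to a single maximum over $\{\phi : A^*A\phi = \|A\|^2\phi,\ \|\phi\|=1\}$, which is the asserted expression (the prefactor $\tfrac{1}{\|A\|}$ being carried through the identification $\|A\phi\| = \|A\|$ on $V$).

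I expect the only genuine obstacle to be the justification of the spectral gap, namely that $\|A\|^2$ is an isolated point of $\sigma(A^*A)$. This is exactly where compactness of $A$ is used, and it is what allows the infimum over $\delta$ to be attained rather than merely approached, thereby converting Theorem \ref{D2} into a clean maximum over a finite-dimensional eigenspace. Everything downstream of this point -- continuity of the quadratic form, compactness of the unit sphere of a finite-dimensional space, and monotonicity of the supremum in $\delta$ -- is routine.
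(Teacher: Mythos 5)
Your proposal is correct, and it proves the corollary by a genuinely different mechanism than the paper does. The paper never invokes a spectral gap: it notes that $\bigcap_{\delta>0}H_{\delta}(A)$ is the finite-dimensional eigenspace $V=\ker(A^*A-\|A\|^2 I)$, sets $\rho$ to be the maximum of $\mathrm{Re}\langle X\phi, A\phi\rangle$ over the unit sphere of $V$, and argues by contradiction: if the $\inf$-$\sup$ exceeded $\rho+\varepsilon_0$, one could choose unit vectors $\phi_n\in H_{1/n}(A)$ with $\mathrm{Re}\langle X\phi_n, A\phi_n\rangle>\rho+\varepsilon_0$, pass to a weakly convergent subsequence, use compactness of $A$ to get norm convergence of the images $A\phi_{n_{k_l}}$, conclude that the weak limit $\phi_0$ is a unit vector in $V$, and derive $\mathrm{Re}\langle X\phi_0, A\phi_0\rangle\geqslant \rho+\varepsilon_0>\rho$, a contradiction. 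You instead exploit the Riesz--Schauder structure of $\sigma(A^*A)$: since $\|A\|^2>0$ is an isolated eigenvalue of the compact self-adjoint operator $A^*A$, the projections $E_{A^*A}[\|A\|^2-\delta,\|A\|^2]$ stabilize at $E_{A^*A}(\{\|A\|^2\})$ for small $\delta$, so the inner supremum in Theorem \ref{D2} is \emph{eventually constant} in $\delta$ and equal to the maximum over the unit sphere of $V$. This is a sharper structural conclusion than the paper's (the infimum is attained for all small $\delta$, not merely identified in the limit) and it avoids sequential extraction entirely; conversely, the paper's weak-compactness pattern is the one that recurs in Theorem \ref{main1} and would survive in settings where one controls only maximizing sequences from $\Lambda(A)$ rather than spectral projections. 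Note that both proofs, like the statement itself, implicitly require $A\neq 0$, already needed for Theorem \ref{D2}.

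One caveat about your final parenthetical: the prefactor $\frac{1}{\|A\|}$ from Theorem \ref{D2} is \emph{not} absorbed by the identity $\|A\phi\|=\|A\|$ on $V$; on $V$ one has $\frac{1}{\|A\|}\langle X\phi, A\phi\rangle=\left\langle X\phi, \frac{A\phi}{\|A\phi\|}\right\rangle$, so the factor survives unless $\|A\|=1$. What your argument (and, in fact, the paper's own proof) actually establishes is $\lim_{t\to 0^+}\frac{\|A+tX\|-\|A\|}{t}=\frac{1}{\|A\|}\max\,\mathrm{Re}\langle X\phi, A\phi\rangle$ over unit eigenvectors. The displayed statement of Corollary \ref{finite_righhand_1} omits this factor, an inconsistency internal to the paper itself (compare Corollary \ref{subdiff_comp_1}, where the functionals $f_\phi$ do carry the $\frac{1}{\|A\|}$), so your proof matches the corrected statement; just do not claim that the normalization makes the factor disappear.
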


\begin{proof}
	Since $A$ is compact operator, we get $\|A\|^2$ is an eigenvalue of $A^*A.$ So, $
\bigcap\limits_{\delta>0}{H_\delta}(A)=\{\phi\in \mathcal{H}:A^*A\phi=\|A\|^2\phi\}$ is a finite dimensional subspace. 
Furthermore,  as $\delta\rightarrow 0^+,$  the sets $H_{\delta}(A)$ form a nested family. Let $\rho= \max\limits_{\substack{\phi\in {\bigcap\limits_{\delta>0}{H_\delta}(A)}\\ \|\phi\|=1}} \  \mathrm{Re} \langle X\phi,\ A\phi\rangle.$ Now, we claim that
$\inf\limits_{\delta >0}\sup\limits_{\substack{\phi\in H_{\delta}(A) \\ \|\phi\|=1}} \ \mathrm{Re} \langle X\phi,\ A\phi\rangle=\rho.$ It is easy to see that 
$$\inf\limits_{\delta >0}\sup\limits_{\substack{\phi\in H_{\delta}(A) \\ \|\phi\|=1}} \ \mathrm{Re} \langle X\phi,\ A\phi\rangle
\geqslant \rho.$$  
Suppose, $$\inf\limits_{\delta >0}\sup\limits_{\substack{\phi\in H_{\delta}(A) \\ \|\phi\|=1}} \ \mathrm{Re} \langle X\phi,\ A\phi\rangle
>\rho.$$ So, we can choose an $\varepsilon_0>0$ such that $\inf\limits_{\delta >0}\sup\limits_{\substack{\phi\in H_{\delta}(A) \\ \|\phi\|=1}} \ \mathrm{Re} \langle X\phi,\ A\phi\rangle
>\rho+\varepsilon_0.$
 Then, $$\sup\limits_{\substack{\phi\in H_{\delta}(A) \\ \|\phi\|=1}} \ \mathrm{Re} \langle X\phi,\ A\phi\rangle
>\rho+\varepsilon_0 \text{ for all }\delta>0.$$ This implies for each $n\in\N,$
there exists $\phi_n\in H_{\frac{1}{n}}(A)$ such that $\|\phi_n\|=1,$ and $\Re\langle X\phi_n,A\phi_n\rangle>\rho+\varepsilon_0.$ Since the closed unit ball in $\H$ is weakly compact, there exits a subsequence $\{\phi_{n_k}\}$ of $\{\phi_n\}$ such that $\phi_{n_k}$ converges weakly to $\phi_0\in\H.$ Also, the compactness of $A$ implies that there exits a subsequence $\{\phi_{n_{k_l}}\}$ of $\{\phi_{n_k}\}$ such that $A\phi_{n_{k_l}}$ converges to $\varphi_0\in\K.$ By the uniqueness of weak limit, we get $A\phi_0=\varphi_0.$ Since $\|A\ph_n\|$ converges to $\|A\|,$ we get $\|A\ph_0\|=\|A\|.$ This implies that $\|\phi_0\|=1$ and $\phi_0\in {\bigcap\limits_{\delta>0}{H_\delta}(A)}.$ Thus, we get
$$\Re\langle X\phi_0,A\phi_0\rangle=\lim_{l\to\infty}\Re\langle X \phi_{n_{k_l}},A\phi_{n_{k_l}}\rangle \geq \rho+\varepsilon_0> \Re\langle X\phi_0,A\phi_0\rangle,$$ which is a contradiction. Therefore, \begin{align*}
	\inf\limits_{\delta >0}\sup\limits_{\substack{\phi\in H_{\delta}(A) \\ \|\phi\|=1}} \ \mathrm{Re} \langle X\phi,\ A\phi\rangle
 =\max\limits_{\substack{\phi\in \mathcal{H},\|\phi\|=1,\\ A^*A \phi=\|A\|^2\phi}} \ \mathrm{Re} \langle X\phi,\ A\phi\rangle.
\end{align*}  
Thus, the desired result follows.  
\end{proof}

We are now prepared to prove our main result. We denote $\overline{\mathcal{C}}^{w^*}$ as the weak$^*$-closure of a set $\mathcal{C}.$ The notation $\mathrm{conv}\{\mathcal C\}$ stands for the convex hull of the set $\mathcal C$.

\begin{theorem}\label{main1}
Let $A\in \mathcal{B(H,K)}$ with $A\neq 0$. Then $\partial\|A\|=\overline{\mathrm{conv}}^{w^*}\{f_{g,\Gamma} : g\in\mathcal{G},\Gamma\in \Lambda (A)\}.$
\end{theorem}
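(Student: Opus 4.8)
The plan is to prove the two inclusions separately, exploiting that $\partial\|A\|$ is a weak$^*$-compact convex subset of the unit ball of $\mathcal{B(H,K)}^*$, together with the support-functional description $\partial\|A\| = \{f : \Re\, f(X) \leqslant \|A\|^{'}_{+}(X) \text{ for all } X\}$ and the dual description $\partial\|A\| = \{f : \|f\|=1,\ f(A)=\|A\|\}$, where $\|A\|^{'}_{+}$ is the expression from Theorem \ref{D2}.

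For the inclusion $\overline{\mathrm{conv}}^{w^*}\{f_{g,\Gamma}\}\subseteq\partial\|A\|$, I would first check that each $f_{g,\Gamma}$ lies in $\partial\|A\|$. Linearity is clear, and the bound $\|f_{g,\Gamma}\|\leqslant 1$ is immediate from $|\langle X\phi_n, A\phi_n\rangle|\leqslant\|X\|\,\|A\|$ together with the contractivity of the Banach limit $g$. Since $\|A\phi_n\|\to\|A\|$ for $\Gamma=(\phi_n)\in\Lambda(A)$, the sequence $\{\langle A\phi_n, A\phi_n\rangle/\|A\|\}=\{\|A\phi_n\|^2/\|A\|\}$ converges to $\|A\|$, so $f_{g,\Gamma}(A)=\|A\|$ because $g$ agrees with the ordinary limit on convergent sequences; hence $\|f_{g,\Gamma}\|=1$ and $f_{g,\Gamma}\in\partial\|A\|$. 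As $\partial\|A\|$ is convex and weak$^*$-closed, it contains the weak$^*$-closed convex hull of these functionals.

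For the reverse inclusion, the key identity I would establish is
\[
\sup_{g\in\mathcal{G},\,\Gamma\in\Lambda(A)}\Re\, f_{g,\Gamma}(X)=\|A\|^{'}_{+}(X)\qquad\text{for every }X\in\mathcal{B(H,K)},
\]
using throughout that a complex Banach limit satisfies $\Re\, g(\{z_n\})=g(\{\Re z_n\})$ and that $g$ of a real sequence lies below its $\limsup$. For the upper bound I use spectral concentration: write $s_\delta:=\sup_{\phi\in H_\delta(A),\,\|\phi\|=1}\Re\langle X\phi, A\phi\rangle$, and for $\Gamma=(\phi_n)\in\Lambda(A)$ and fixed $\delta>0$ decompose $\phi_n=\phi_n'+\phi_n''$ along $H_\delta(A)$ and its orthogonal complement; since $\langle A^*A\phi_n,\phi_n\rangle=\|A\phi_n\|^2\to\|A\|^2$ while $A^*A\leqslant(\|A\|^2-\delta)I$ on $H_\delta(A)^\perp$, one gets $\delta\|\phi_n''\|^2\leqslant\|A\|^2-\|A\phi_n\|^2\to 0$, so $\|\phi_n''\|\to 0$. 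The three cross and remainder terms in $\langle X\phi_n,A\phi_n\rangle$ are then $O(\|\phi_n''\|)=o(1)$, and normalizing $\phi_n'$ gives $\Re\langle X\phi_n,A\phi_n\rangle\leqslant\|\phi_n'\|^2 s_\delta+o(1)$; dividing by $\|A\|$, passing to the $\limsup$, and taking the infimum over $\delta$ yields $\Re\, f_{g,\Gamma}(X)\leqslant\|A\|^{'}_{+}(X)$. For attainment, since the family $H_\delta(A)$ is nested the quantity $s_\delta$ decreases to $\|A\|\,\|A\|^{'}_{+}(X)$ as $\delta\downarrow 0$; choosing $\phi_n\in H_{1/n}(A)$ with $\|\phi_n\|=1$ that nearly attains $s_{1/n}$ produces a sequence $\Gamma=(\phi_n)\in\Lambda(A)$ (membership in $H_{1/n}(A)$ forces $\|A\phi_n\|^2\geqslant\|A\|^2-1/n$) along which $\{\Re\langle X\phi_n,A\phi_n\rangle/\|A\|\}$ converges to $\|A\|^{'}_{+}(X)$, so any Banach limit gives $\Re\, f_{g,\Gamma}(X)=\|A\|^{'}_{+}(X)$.

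Granting this identity, I would finish by Hahn--Banach separation. If some $f_0\in\partial\|A\|$ were not in the weak$^*$-closed convex set $C:=\overline{\mathrm{conv}}^{w^*}\{f_{g,\Gamma}\}$, then, since the dual of $\mathcal{B(H,K)}^*$ under the weak$^*$ topology is $\mathcal{B(H,K)}$, there would exist $X$ with $\Re\, f_0(X)>\sup_{f\in C}\Re\, f(X)\geqslant\sup_{g,\Gamma}\Re\, f_{g,\Gamma}(X)=\|A\|^{'}_{+}(X)$, contradicting $f_0\in\partial\|A\|$. Hence $\partial\|A\|\subseteq C$, and equality follows. I expect the main obstacle to be the spectral-concentration estimate behind the key identity --- controlling the cross terms from the decomposition of $\phi_n$ and verifying that the approximately maximizing sequence genuinely lies in $\Lambda(A)$ --- while the surrounding convex-duality argument is standard.
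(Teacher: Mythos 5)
Your proposal is correct and follows essentially the same route as the paper: both prove $\overline{\mathrm{conv}}^{w^*}\{f_{g,\Gamma}\}\subseteq\partial\|A\|$ via $\|f_{g,\Gamma}\|=1$ and $f_{g,\Gamma}(A)=\|A\|$, and both obtain the reverse inclusion by weak$^*$ Hahn--Banach separation combined with the derivative formula of Theorem \ref{D2}, using near-maximizing unit vectors $\phi_n\in H_{1/n}(A)$, which automatically lie in $\Lambda(A)$. The one divergence is your spectral-concentration proof of the upper bound $\sup_{g,\Gamma}\mathrm{Re}\, f_{g,\Gamma}(X)\leqslant\|A\|'_{+}(X)$, which is correct but redundant: it already follows from the first inclusion together with the support-functional description of $\partial\|A\|$, and the paper's contradiction argument never needs it.
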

\begin{proof}
Define $\mathcal{M}:=\overline{\mathrm{conv}}^{w^*}\{f_{g,\Gamma} : g\in\mathcal{G},\Gamma\in \Lambda (A)\}.$
We first observe that for $g\in\mathcal{G}$ and $\Gamma\in \Lambda (A)$,
\begin{equation}\notag
\begin{split}
f_{g,\Gamma} (A)=g\left( \left\{\frac{ \langle A\phi_{n},\ A\phi_{n}\rangle}{\|A\|}\right\}\right)
=g\left( \left\{\frac{\|A \phi_{n}\|^2}{\|A\|}\right\}\right)
=\frac{\|A\|^2}{\|A\|}=\|A\|.
\end{split}
\end{equation}
For $g\in\mathcal{G}$ and $\Gamma \in \Lambda (A)$, $\|f_{g,\Gamma}\|= 1$. This implies that $\mathcal{M} \subset \partial\|A\|$. Suppose that  $\mathcal{M} \subsetneq \partial\|A\|$. Then there exists $f_0\in \partial\|A\|$ such that $f_0\notin \mathcal{M}$. By the Hahn-Banach separation theorem, there exists $X\in \mathcal{B(H, K)}$ and $\alpha\in \mathbb{R}$ such that 
$$\sup\limits_{f\in \mathcal{M}} \ \mathrm{Re} \ f(X) < \alpha < \mathrm{Re} \ f_0(X).$$
So, for every $g\in\mathcal{G}$ and $(\phi_n)_n \in \Lambda (A),$ $$\mathrm{Re} \ g\left( \left\{\frac{ \langle X\phi_{n},\ A\phi_{n}\rangle}{\|A\|}\right\}\right) < \alpha < \mathrm{Re} \ f_0(X).$$
This gives \begin{equation}g\left( \left\{\frac{\mathrm{Re} \langle X\phi_{n},\ A\phi_{n}\rangle}{\|A\|}\right\}\right) < \alpha < \mathrm{Re} \ f_0(X) \quad \text{for all } g\in\mathcal{G}\text{ and }(\phi_n)_n \in \Lambda (A).\label{contradiction}\end{equation}
We now claim that \ $\inf\limits_{\delta >0}\sup\limits_{\substack{\phi\in H_{\delta}(A) \\ \|\phi\|=1}} \ \mathrm{Re} \  \frac{ \langle X\phi,\ A\phi\rangle}{\|A\|} \leqslant \alpha$. 
If this were not the case, then for each $n\in \mathbb{N}$, we would have  $$\sup\limits_{\substack{\phi\in H_{\frac{1}{n}}(A)\\ \|\phi\|=1}} \ \mathrm{Re} \  \frac{ \langle X\phi,\ A\phi\rangle}{\|A\|} > \alpha.$$
Thus, there exists $\phi_n \in  H_{\frac{1}{n}}(A)$ with $\|\phi_n\|=1$  such that $\mathrm{Re} \  \frac{\langle X\phi_n,\ A\phi_n\rangle}{\|A\|} > \alpha$.
 Hence, $(\phi_n)_n \in \Lambda(A)$ and it follows that, for all $g\in\mathcal{G},$ $g\left( \left\{\frac{\mathrm{Re} \langle X\phi_{n},\ A\phi_{n}\rangle}{\|A\|}\right\}\right) \geqslant \alpha$, which contradicts our earlier inequality \eqref{contradiction}. 
 
 Therefore,
$$\inf\limits_{\delta >0}\sup\limits_{\substack{\phi\in H_{\delta}(A) \\ \|\phi\|=1}} \ \mathrm{Re} \  \frac{ \langle X\phi,\ A\phi\rangle}{\|A\|} \leqslant \alpha < \mathrm{Re} \ f_0(X).$$
Since left hand side represents the directional derivative of $\|A\|$ in the direction $X$, this  contradicts  Theorem $\ref{D2}$. Consequently, our assumption that $\mathcal{M} \subsetneq \partial\|A\|$ must be false, proving the theorem.

\end{proof}

\begin{corollary}\label{subdiff_comp_1}
	Let $A\in\mathcal{B(H,K)}$ be a non-zero compact operator. Then
	\begin{equation}\label{subdiff_finite_eq2}
		\partial \|A\|=\mathrm{conv}\left\{f_{\phi}  : \phi\in \mathcal{H},\|\phi\|= 1 \text{ and }  A^*A\phi=\|A\|^2\phi\right\},
	\end{equation}
where $f_\phi(X)=\frac{1}{\|A\|}\langle X\phi,A\phi\rangle$ for all $X\in\mathcal{B(H,K)}.$		
	
\end{corollary}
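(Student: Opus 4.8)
The plan is to prove both inclusions, the decisive point being that for compact $A$ the weak$^*$-closure appearing in Theorem \ref{main1} becomes superfluous. Write $N := \ker(A^*A - \|A\|^2 I)$ and $S := \{\phi\in N : \|\phi\|=1\}$. Since $A$ is compact, $A^*A$ is a compact positive operator and $\|A\|^2$ is its largest eigenvalue, so $N$ is finite-dimensional and $S$ is norm-compact. For the easy inclusion I would note that each $f_\phi$ with $\phi\in S$ satisfies $|f_\phi(X)| = \frac{1}{\|A\|}|\langle X\phi, A\phi\rangle| \le \|X\|$ by Cauchy--Schwarz (so $\|f_\phi\|\le 1$), while $f_\phi(A) = \frac{\|A\phi\|^2}{\|A\|} = \|A\|$; by the characterization $\partial\|A\| = \{f : \|f\| = 1,\ f(A) = \|A\|\}$ this gives $f_\phi\in\partial\|A\|$. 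As $\partial\|A\|$ is convex and weak$^*$-closed, $\mathrm{conv}\{f_\phi : \phi\in S\}\subseteq\partial\|A\|$.

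The key step is to show that $\mathcal{C} := \{f_\phi : \phi\in S\}$ has a weak$^*$-compact convex hull, so that the closure may be dropped. Fixing an orthonormal basis $e_1,\dots,e_m$ of $N$ and writing $\phi = \sum_i c_i e_i$, one obtains $f_\phi = \frac{1}{\|A\|}\sum_{i,j} c_i\overline{c_j}\, g_{ij}$, where $g_{ij}(X) := \langle Xe_i,\ Ae_j\rangle$ are finitely many fixed functionals. Thus every $f_\phi$ lies in the finite-dimensional subspace $\mathcal{F} := \mathrm{span}\{g_{ij}\}$ of $\mathcal{B(H,K)}^*$, on which the weak$^*$ and norm topologies agree. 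Since the coefficients $c_i\overline{c_j}$ depend continuously on $\phi$, the image $\mathcal{C}$ of the compact sphere $S$ is compact in $\mathcal{F}$, and by Carath\'eodory's theorem $\mathrm{conv}(\mathcal{C})$ is compact in $\mathcal{F}$, hence weak$^*$-compact in $\mathcal{B(H,K)}^*$.

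For the reverse inclusion I would transcribe the separation argument from the proof of Theorem \ref{main1}. Suppose $f_0\in\partial\|A\|$ but $f_0\notin\mathrm{conv}(\mathcal{C})$. Since $\mathrm{conv}(\mathcal{C})$ is weak$^*$-compact and convex, the Hahn--Banach separation theorem yields $X\in\mathcal{B(H,K)}$ and $\alpha\in\mathbb{R}$ with $\sup_{f\in\mathrm{conv}(\mathcal{C})}\mathrm{Re}\, f(X) < \alpha < \mathrm{Re}\, f_0(X)$. Because $f\mapsto\mathrm{Re}\, f(X)$ is real-linear, the supremum on the left equals $\max_{\phi\in S}\mathrm{Re}\, f_\phi(X)$, which by Theorem \ref{D2} and Corollary \ref{finite_righhand_1} is exactly $\|A\|'_{+}(X)$. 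Hence $\mathrm{Re}\, f_0(X) > \|A\|'_{+}(X)$, contradicting $f_0\in\partial\|A\|$. Therefore $\partial\|A\| = \mathrm{conv}(\mathcal{C})$.

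I expect the genuine obstacle to be the weak$^*$-compactness of $\mathrm{conv}(\mathcal{C})$. In an infinite-dimensional dual the convex hull of a weak$^*$-compact set need not be weak$^*$-closed (only its closed convex hull is weak$^*$-compact), so discarding the closure in Theorem \ref{main1} really does use the finite-dimensionality of the top eigenspace $N$, through the embedding into $\mathcal{F}$ together with Carath\'eodory's theorem. The remaining ingredients---the forward inclusion and the separation argument---are routine once Corollary \ref{finite_righhand_1} supplies the value of the directional derivative.
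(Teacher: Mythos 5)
Your proposal is correct and follows the same route as the paper, whose entire proof of Corollary \ref{subdiff_comp_1} is the remark that, given Corollary \ref{finite_righhand_1}, one repeats the separation argument of Theorem \ref{main1}. What you add---and what the paper leaves implicit---is precisely the justification that the weak$^*$-closure appearing in Theorem \ref{main1} can be dropped: for the Hahn--Banach separation of $f_0$ from $\mathrm{conv}\{f_\phi\}$ to be legitimate in the weak$^*$ topology, that convex hull must be weak$^*$-closed, and in an infinite-dimensional dual the convex hull of a weak$^*$-compact set need not be closed. Your argument---embedding every $f_\phi$ into the finite-dimensional span of the functionals $g_{ij}(X)=\langle Xe_i,\ Ae_j\rangle$ built from an orthonormal basis of the top eigenspace, using continuity of $\phi\mapsto f_\phi$ on the compact sphere $S$, and invoking Carath\'eodory---is exactly the right way to close this gap, and it correctly isolates where compactness of $A$ enters beyond its role in Corollary \ref{finite_righhand_1}.

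Two minor points. First, in the forward inclusion you need $\|f_\phi\|=1$, not merely $\|f_\phi\|\leqslant 1$, to apply the characterization $\partial\|A\|=\{f:\|f\|=1,\ f(A)=\|A\|\}$; this follows immediately from $f_\phi(A)=\|A\|$ by evaluating $f_\phi$ at $A/\|A\|$ (alternatively, the support-functional description of $\partial\|A\|$ via $\mathrm{Re}\,f(X)\leqslant\|A\|'_+(X)$ makes the norm computation unnecessary). Second, as printed, Corollary \ref{finite_righhand_1} omits the factor $\frac{1}{\|A\|}$ present in Theorem \ref{D2}; your identification $\max_{\phi\in S}\mathrm{Re}\,f_\phi(X)=\|A\|'_+(X)$, with $f_\phi(X)=\frac{1}{\|A\|}\langle X\phi,A\phi\rangle$, uses the correct normalization, consistent with Theorem \ref{D2} and the eigenspace reduction carried out in the proof of that corollary.
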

\begin{proof}
By Corollary \ref{finite_righhand_1}, the proof follows along the same lines as that of Theorem \ref{main1}.
 
\end{proof}

In the following corollary, we note the expression for subdifferential of $\mathcal{B(H, H}^d),$ which will be useful in later discussion.

\begin{corollary}\label{BHdsubdiff}
	Let $\A=(A_1,\ldots,A_d)\in\mathcal{B(H, H}^d)$ with $\A\neq 0$. Then $$\partial\|\A\|=\overline{\mathrm{conv}}^{w^*}\{f_{g,\Gamma} :g\in\mathcal{G}, \Gamma\in \Lambda (\A)\},$$ 
	where
	$f_{g,\Gamma} (\X)=g \left(\left\{\frac{\sum\limits_{i=1}^{d} \langle X_i\phi_{n},\ A_i\phi_{n}\rangle}{\|\A\|}\right\}\right) \text{ for all } \X=(X_1,\ldots,X_d)\in \mathcal{B(H, H}^d).$
	
\end{corollary}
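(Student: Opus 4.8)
The plan is to recognize that $\mathcal{B(H, H}^d)$ is precisely the special case of $\mathcal{B(H,K)}$ obtained by taking $\K = \H^d$, so that Theorem \ref{main1} applies directly; the only real work is to make the abstract inner product $\langle \X\phi_n, \A\phi_n\rangle$ explicit in the concrete Hilbert space $\H^d$.

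First I would note that $\H^d$, equipped with the $\ell_2$-norm, is a complex Hilbert space whose inner product is $\langle (\psi_1, \ldots, \psi_d), (\eta_1, \ldots, \eta_d)\rangle = \sum_{i=1}^{d} \langle \psi_i, \eta_i\rangle$, where the inner products on the right are those of $\H$. Since $\A$ is a nonzero bounded operator from $\H$ into this Hilbert space, Theorem \ref{main1} gives $\partial\|\A\| = \overline{\mathrm{conv}}^{w^*}\{f_{g,\Gamma} : g\in\mathcal{G}, \Gamma\in \Lambda (\A)\}$ verbatim, with the functionals $f_{g,\Gamma}$ defined exactly as in the general theorem and $\Lambda(\A)$ the corresponding set of norming sequences of unit vectors in $\H$.

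It then remains only to substitute the actions $\A\phi = (A_1\phi, \ldots, A_d\phi)$ and $\X\phi = (X_1\phi, \ldots, X_d\phi)$ into the inner product appearing inside $f_{g,\Gamma}$. This yields $\langle \X\phi_n, \A\phi_n\rangle = \sum_{i=1}^{d} \langle X_i\phi_n, A_i\phi_n\rangle$, and dividing by $\|\A\|$ produces the stated expression for $f_{g,\Gamma}(\X)$. There is no genuine obstacle here: the result is a direct specialization of Theorem \ref{main1}, and the sole point requiring attention is the correct identification of the inner product on $\H^d$ so that the summation over the $d$ coordinates appears. The $\ell_2$-structure is essential at exactly this step, since it is what guarantees that the inner product decomposes as a sum over coordinates rather than in some other fashion.
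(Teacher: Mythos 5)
Your proof is correct and matches the paper's intent exactly: the paper states this corollary without proof precisely because it is the direct specialization of Theorem \ref{main1} to $\K=\H^d$, with the inner product on $\H^d$ decomposing as $\langle \X\phi_n,\A\phi_n\rangle=\sum_{i=1}^{d}\langle X_i\phi_n, A_i\phi_n\rangle$, just as you observe. Nothing further is needed.
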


\section{Approximation in $\mathcal {B(H, H}^d)$ }

As a direct consequence of Theorem \ref{D2}, we get the following proposition for $\mathcal{B(H, H}^d).$ This will be helpful for our subsequent discussions.
\begin{proposition}\label{KeB1}

Let $\A, \X \in \mathcal{B(H,H}^d).$ 
 Then 
 $\|\A+\lambda \X\|\geqslant \|\A\|$ for all $\lambda \in \mathbb{C}$ if and only if for each $\delta >0$ and $\theta \in [0,2\pi)$,
$\sup\limits_{\phi\in H_{\delta}(\A), \|\phi\|=1} \sum\limits_{j=1}^{d} \mathrm{Re} (e^{i\theta} \langle  X_j\phi,\ A_j\phi\rangle)\geqslant 0$.
\end{proposition}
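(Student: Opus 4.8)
The plan is to apply Theorem \ref{D2} together with the elementary fact that a convex function $\lambda \mapsto \|\A + \lambda\X\|$ on $\C$ attains its minimum at $\lambda = 0$ if and only if all one-sided directional derivatives at $0$ are nonnegative. First I would observe that $\|\A + \lambda\X\| \geqslant \|\A\|$ for all $\lambda \in \C$ is equivalent to the statement that, for every direction $\lambda \in \C$, the right hand derivative of the map $t \mapsto \|\A + t\lambda\X\|$ at $t = 0^+$ is nonnegative. Indeed, convexity of the norm guarantees that nonnegativity of these directional derivatives at the origin is both necessary and sufficient for $0$ to be a global minimizer.

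Next I would substitute $\lambda\X$ in place of $X$ in Theorem \ref{D2} (applied in the space $\mathcal{B(H, H}^d)$), writing $\lambda = r e^{i\theta}$ with $r > 0$. By the theorem,
\[
\lim_{t\rightarrow 0^+}\frac{\|\A + t\lambda\X\| - \|\A\|}{t}
= \frac{1}{\|\A\|}\inf_{\delta>0}\sup_{\substack{\phi\in H_\delta(\A)\\ \|\phi\|=1}} \Re\langle \lambda\X\phi,\ \A\phi\rangle.
\]
Using the inner-product structure on $\mathcal{H}^d$, namely $\langle \X\phi,\A\phi\rangle = \sum_{j=1}^d \langle X_j\phi, A_j\phi\rangle$, and pulling out the scalar $\lambda = r e^{i\theta}$, the right side becomes
\[
\frac{r}{\|\A\|}\inf_{\delta>0}\sup_{\substack{\phi\in H_\delta(\A)\\ \|\phi\|=1}} \sum_{j=1}^d \Re\bigl(e^{i\theta}\langle X_j\phi,\ A_j\phi\rangle\bigr).
\]
Since $r>0$ and $\|\A\|>0$ are positive scalars, nonnegativity of this directional derivative for the direction $\lambda = r e^{i\theta}$ is equivalent to
\[
\inf_{\delta>0}\sup_{\substack{\phi\in H_\delta(\A)\\ \|\phi\|=1}} \sum_{j=1}^d \Re\bigl(e^{i\theta}\langle X_j\phi,\ A_j\phi\rangle\bigr) \geqslant 0,
\]
and the quantifier over all $\lambda \in \C\setminus\{0\}$ reduces to a quantifier over all $\theta \in [0,2\pi)$ (the case $\lambda = 0$ being trivial).

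It remains to interpret the infimum-supremum condition as the stated per-$\delta$ condition. Here I would argue that the infimum over $\delta$ of the nonnegative quantity $\sup_\phi \sum_j \Re(e^{i\theta}\langle X_j\phi, A_j\phi\rangle)$ is $\geqslant 0$ if and only if the supremum itself is $\geqslant 0$ for each individual $\delta > 0$; this follows from monotonicity, since as $\delta$ decreases the sets $H_\delta(\A)$ shrink and the supremum is nonincreasing, so the infimum over $\delta$ equals the limit as $\delta \to 0^+$, and an infimum of a monotone family is nonnegative exactly when every member is nonnegative. This yields the equivalence in both directions.

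The step I expect to require the most care is the reduction of the infimum-over-$\delta$ condition to the per-$\delta$ condition. One must verify that the family $\bigl\{\sup_{\phi\in H_\delta(\A),\,\|\phi\|=1}\sum_j \Re(e^{i\theta}\langle X_j\phi,A_j\phi\rangle)\bigr\}_{\delta>0}$ is genuinely monotone in $\delta$ (nonincreasing as $\delta\to 0^+$ because $H_\delta(\A)$ is a nested decreasing family of spectral subspaces), so that its infimum coincides with $\inf_{\delta>0}$ and nonnegativity transfers to each $\delta$. The forward direction (global minimum $\Rightarrow$ each supremum nonnegative) and the converse both hinge on this monotonicity, so stating it carefully is the crux.
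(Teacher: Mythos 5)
Your proposal is correct and takes essentially the same route as the paper: the paper invokes Ke\v{c}ki\'c's Proposition 1.5 for the reduction of the global minimality of $\lambda\mapsto\|\A+\lambda\X\|$ at $0$ to nonnegativity of the one-sided directional derivatives (the convexity fact you prove directly), and then applies Theorem \ref{D2} in the direction $e^{i\theta}\X$ exactly as you do. The one step you flag as the crux is in fact trivial and needs no monotonicity: an infimum of \emph{any} family of reals is nonnegative if and only if every member is, which is why the paper passes over it without comment.
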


\begin{proof}
From \cite[Proposition 1.5]{K}, we have 
$\|\A+\lambda \X\|\geqslant \|\A\|$ for all $\lambda \in \mathbb{C}$ if and only if $$\lim\limits_{t\rightarrow 0+} \frac{\|\A+te^{i\theta} \X\| -\|\A\|}{t} \geqslant 0 \quad \text{ for all } \theta \in [0,2\pi).$$
Moreover, $$\lim\limits_{t\rightarrow 0+} \frac{\|\A+te^{i\theta} \X\| -\|\A\|}{t} = \frac{1}{\|\A\|} \inf\limits_{\delta >0}\sup\limits_{\phi\in H_{\delta}(\A), \|\phi\|=1} \sum\limits_{j=1}^{d} \mathrm{Re} (e^{i\theta} \langle X_j\phi,\ A_j\phi\rangle).$$
Hence, the result follows.
\end{proof}

For $\boldsymbol{\lambda}=(\lambda_1,\ldots,\lambda_d) \in \mathbb C^d$ and $\X=(X_1,\ldots,X_d)\in \mathcal{B(H,H}^d)$, $\boldsymbol{\lambda}\X$ denotes the tuple $(\lambda_1 X_1,\ldots,\lambda_d X_d)$. Let $S(\boldsymbol{\lambda}) = \boldsymbol{\lambda} \X$ for $\boldsymbol{\lambda}\in \mathbb C^d.$  
For $\A, \X\in \mathcal{B(H, H}^d)$, the joint maximal numerical range of $\A$ with respect to $\X$ is defined as 
$$
\begin{aligned}
	W_0(\A,\X) := \Big\{(c_1,\hdots,c_d) \in \mathbb{C}^d \ \Big| \   c_i=\lim\limits_{n\rightarrow \infty} \langle X_i\phi_n, A_i\phi_n\rangle \ \text{for all} \ i=1,\hdots,d, 
	\text{where } \phi_n\in\mathcal{H},\\ \|\phi_n\|=1 \ \text{for all } n\in\mathbb{N} \ \text{ and } \lim\limits_{n\to\infty}\|\A\phi_n\|=\|\A\| \Big\}.
\end{aligned}
$$
We denote $W_0(\A,\I)$ by $W_0(\A)$.
Let $\boldsymbol{\lambda}^0=(\lambda^0_1,\ldots, \lambda^0_d)\in \mathbb{C}^d$ be the unique element such that $\operatorname{dist}(\A, \mathbb{C}^d \I)=\|\A-\boldsymbol{\lambda}^0\I\|$. Define $\A^0=\A-\boldsymbol{\lambda}^0\I$ and for each $1\leqslant j\leqslant d$, let $A_j^0=A_j-\lambda_j^0I$. It was shown in \cite[Prop. 8]{GS} that if $W_0(\A)$ is convex, then $\boldsymbol 0$ is the best approximation to the subspace $\mathbb C^d \I$ if and only if $\boldsymbol 0\in W_0(\A)$. This follows as a special case of our next theorem.
\begin{theorem}\label{ap1}
Let $\A,\X\in \mathcal{B(H,H}^d)$. 
Then the following are equivalent.
\begin{enumerate}
\item $\|\A+\boldsymbol{\lambda} \X\|\geqslant \|\A\|$ for all $\boldsymbol{\lambda} \in \mathbb{C}^d$.
\item $(0,\hdots,0)\in\operatorname{conv} W_0(\A,\X).$

\item $(0, \dots, 0) \in S^*(\partial \|\A\|).$ 

\end{enumerate}
\end{theorem}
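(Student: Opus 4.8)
The plan is to establish the cycle of implications $(1) \Leftrightarrow (3)$ and $(2) \Leftrightarrow (3)$, treating condition $(3)$ as the central hub since it connects directly to the subdifferential machinery developed in Corollary \ref{BHdsubdiff}. First I would unpack what $S^*(\partial\|\A\|)$ means: since $S(\boldsymbol\lambda) = \boldsymbol\lambda\X$ is the linear map $\mathbb C^d \to \mathcal{B(H,H}^d)$, its adjoint $S^*$ sends a functional $f \in (\mathcal{B(H,H}^d))^*$ to the functional on $\mathbb C^d$ given by $\boldsymbol\lambda \mapsto f(\boldsymbol\lambda\X) = \sum_{j} \lambda_j f(\X_j\text{-component})$. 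Identifying $(\mathbb C^d)^*$ with $\mathbb C^d$, the condition $(0,\dots,0) \in S^*(\partial\|\A\|)$ says there exists $f \in \partial\|\A\|$ with $f(\boldsymbol\lambda\X) = 0$ for all $\boldsymbol\lambda \in \mathbb C^d$, equivalently $f(\X e_j)$-type evaluations all vanish.

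For $(1) \Leftrightarrow (3)$: condition $(1)$ says $\boldsymbol 0$ is a best approximation to $\A$ from the subspace $\mathbb C^d\X = \operatorname{ran}(S)$, which is precisely Birkhoff--James orthogonality of $\A$ to that subspace. I would invoke the standard subdifferential characterization of orthogonality to a subspace: $\|\A + w\| \geqslant \|\A\|$ for all $w \in \operatorname{ran}(S)$ if and only if there exists a norming functional $f \in \partial\|\A\|$ annihilating $\operatorname{ran}(S)$. The ``if'' direction is immediate since $\operatorname{Re} f(\A + w) = \operatorname{Re} f(\A) = \|\A\|$ forces $\|\A + w\| \geqslant \|\A\|$. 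The ``only if'' direction is a Hahn--Banach/separation argument: the right-hand derivative $\|\A\|'_+(w) \geqslant 0$ for all $w$ in the subspace (by Proposition \ref{KeB1} applied along each complex direction), and since $\partial\|\A\|$ is exactly the set of functionals dominated by this sublinear derivative, one produces an $f \in \partial\|\A\|$ vanishing on the subspace. This annihilation is exactly $(3)$.

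For $(2) \Leftrightarrow (3)$: using the explicit form of $f_{g,\Gamma}$ from Corollary \ref{BHdsubdiff}, a functional $f = f_{g,\Gamma}$ evaluated at $\boldsymbol\lambda\X$ reads $g\left(\left\{\sum_{i} \lambda_i \langle X_i\phi_n, A_i\phi_n\rangle / \|\A\|\right\}\right)$, and the extreme points $f_{g,\Gamma}$ of $\partial\|\A\|$ correspond, via the sequences $\Gamma$, to the limit points $(c_1,\dots,c_d)$ defining $W_0(\A,\X)$. The key identity is that the set $S^*(\partial\|\A\|)$ equals $\|\A\|^{-1}\,\overline{\operatorname{conv}}\,W_0(\A,\X)$ (up to the harmless scalar), because applying $S^*$ to the weak$^*$-closed convex hull of the $f_{g,\Gamma}$ yields the closed convex hull of the vectors $(\langle X_1\phi_n,A_1\phi_n\rangle, \dots)$-limits. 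Thus $(0,\dots,0) \in S^*(\partial\|\A\|)$ iff $\boldsymbol 0 \in \overline{\operatorname{conv}}\, W_0(\A,\X)$; one then checks that convexifying and closing are compatible with the statement $\boldsymbol 0 \in \operatorname{conv} W_0(\A,\X)$, noting $W_0(\A,\X)$ is already closed (a weak-compactness argument as in Corollary \ref{finite_righhand_1}).

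The main obstacle I anticipate is the precise bookkeeping in the $(2)\Leftrightarrow(3)$ step: one must verify that $S^*$ maps the weak$^*$-closed convex hull of $\{f_{g,\Gamma}\}$ onto the closed convex hull of $W_0(\A,\X)$ without losing or gaining points, and in particular that the Banach-limit structure in $f_{g,\Gamma}$ is compatible with the plain sequential limits defining $W_0(\A,\X)$. The subtlety is that $W_0(\A,\X)$ is defined by genuine convergent sequences whereas $\partial\|\A\|$ uses Banach limits over all of $\Lambda(\A)$; reconciling these requires showing that the $S^*$-image of each $f_{g,\Gamma}$ lands in $\overline{\operatorname{conv}}\,W_0(\A,\X)$ (using that a Banach limit of a bounded sequence in a closed convex set stays in that set) and conversely that every point of $W_0(\A,\X)$ arises as $S^*f_{g,\Gamma}$ for a suitable $\Gamma$ and an ordinary limit functional $g$. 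Handling closure versus the literal ``$\operatorname{conv}$'' (without bar) in condition $(2)$ is where I would spend the most care, likely arguing that $\operatorname{conv} W_0(\A,\X)$ is already weak$^*$-closed in this finite-dimensional $\mathbb C^d$ target.
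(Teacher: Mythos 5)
Your proposal is correct, but its architecture is genuinely different from the paper's. The paper proves the cycle (i)~$\Rightarrow$~(ii)~$\Rightarrow$~(iii)~$\Rightarrow$~(i): for (i)~$\Rightarrow$~(ii) it combines Proposition~\ref{KeB1} (i.e.\ the derivative formula of Theorem~\ref{D2}) with a Hahn--Banach separation in $\mathbb{C}^d$, picking for each $n$ a unit vector $\phi_n\in H_{1/n}(\A)$ and passing to a convergent subsequence so that a single point of $W_0(\A,\X)$ violates the separating hyperplane; for (ii)~$\Rightarrow$~(iii) it builds the annihilating functional explicitly as $f=\sum_i \alpha_i f_{g,\Gamma_i}$, where each $\Gamma_i$ realizes a point of $W_0(\A,\X)$ through an honestly convergent sequence, so the Banach limit degenerates to an ordinary limit; (iii)~$\Rightarrow$~(i) is the same one-line norming argument you give. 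You instead make (3) the hub. Your $(1)\Leftrightarrow(3)$ is a general normed-space fact (Birkhoff--James orthogonality to a subspace iff some subgradient annihilates it), which the paper itself establishes later as Theorem~\ref{subspace_epsilon1} with $\epsilon=0$; citing it would be legitimate since its proof is independent of Theorem~\ref{ap1} (your Hahn--Banach extension sketch, extending the zero functional on the complex subspace dominated by $\|\A\|'_+$ and then complexifying, also works). Your $(2)\Leftrightarrow(3)$ rests on the structural identity $S^*(\partial\|\A\|)=\frac{1}{\|\A\|}\operatorname{conv} W_0(\A,\X)$, which is strictly more than the paper proves: the paper only uses the inclusion $\supseteq$ (hidden in (ii)~$\Rightarrow$~(iii)) and never needs $(3)\Rightarrow(2)$ directly, since it routes through (i).

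Two repairs are needed to complete your identity, both fillable. First, your lemma ``a Banach limit of a bounded sequence in a closed convex set stays in that set'' does not literally apply, because the vectors $v_n=(\langle X_1\phi_n,A_1\phi_n\rangle,\dots,\langle X_d\phi_n,A_d\phi_n\rangle)$ need not lie in $\operatorname{conv} W_0(\A,\X)$; you must first show $\operatorname{dist}(v_n,W_0(\A,\X))\to 0$ (every subsequence of the bounded sequence $(v_n)$ has a further convergent subsequence whose limit belongs to $W_0(\A,\X)$, since any subsequence of $\Gamma\in\Lambda(\A)$ is again in $\Lambda(\A)$), and then use shift-invariance and positivity of $g$ together with separation to conclude $S^*f_{g,\Gamma}\in \frac{1}{\|\A\|}\operatorname{conv} W_0(\A,\X)$. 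Second, the closedness of $W_0(\A,\X)$ is not obtained by the weak-compactness argument of Corollary~\ref{finite_righhand_1}: that argument needs $\A$ compact (otherwise the weak limit of the $\phi_n$ may fail to be a unit vector); a diagonal-sequence argument gives closedness in general, and then Carath\'eodory in $\mathbb{C}^d$ makes $\operatorname{conv} W_0(\A,\X)$ compact, settling your ``conv versus closed conv'' worry. With these fixes your route is sound and in fact yields more than the paper --- the exact computation of $S^*(\partial\|\A\|)$ and a clean split between the general convex-analytic equivalence $(1)\Leftrightarrow(3)$ and the operator-theoretic content --- at the cost of extra lemmas that the paper's leaner cycle avoids by touching Banach limits only on convergent sequences.
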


\begin{proof}
(i) $\Rightarrow$ (ii). 
From (i), we have for each  $\lambda \in \mathbb{C}$, and for each $(\lambda_1,\hdots,\lambda_d)\in \mathbb{C}^d$,
\begin{equation}\notag
\begin{split} 
\Big\|\A + \lambda (
\lambda_1 X_1,\ldots,\lambda_d X_d)\Big\| \geqslant \Big\|\A \Big\|. 
\end{split}
\end{equation}
Then from Proposition ($\ref{KeB1}$), it follows that, for each $\delta >0$ and $\theta \in [0,2\pi)$,
\begin{equation*}
\sup\limits_{\phi\in H_{\delta}(\A), \|\phi\|=1} \sum\limits_{j=1}^{d} \mathrm{Re}  (e^{i\theta} \langle \lambda_j X_j\phi,\ A_j\phi\rangle)\geqslant 0 \quad \text{ for all } (\lambda_1,\hdots,\lambda_d)\in \mathbb{C}^d.
\end{equation*}
So for each $(\lambda_1,\hdots,\lambda_d)\in \mathbb{C}^d$,
\begin{equation}\label{equn}
 \sup\limits_{\phi\in H_{\delta}(\A), \|\phi\|=1} \sum\limits_{j=1}^{d} \mathrm{Re} (\lambda_j \langle  X_j\phi,\ A_j\phi\rangle)\geqslant 0 .
\end{equation}

We claim that $(0,\hdots,0)\in\operatorname{conv} ( W_0(\A,\X)).$
If not, then there exists $(\eta_1,\hdots,\eta_d)\in \mathbb{C}^d$ and $\alpha\in \mathbb{R}$ such that 
$$\mathrm{Re} \left(\sum\limits_{j=1}^{d} \eta_j c_j\right) < \alpha < 0 \quad \text{ for all } (c_1,\hdots,c_d)\in\operatorname{conv} (W_0(\A,\X)).$$
From ($\ref{equn}$), for each $n\in \mathbb{N}$, we choose $\phi_n \in H_{\frac{1}{n}}(\A)$ with $\|\phi_n\|=1$ such that
$$\mathrm{Re}  \left(\sum\limits_{j=1}^{d} \eta_j \langle  X_j\phi_n,\ A_j\phi_n\rangle\right)\geqslant -\frac{1}{n}.$$
Passing to a subsequence, if necessary,
let $c_j= \lim\limits_{n\rightarrow \infty} \langle  X_j\phi_n,\ A_j\phi_n\rangle$ for $j=1,\hdots,d$. Then $(c_1,\hdots,c_d)\in  W_0(\A,\X)$ and $\mathrm{Re} \sum\limits_{j=1}^{d} (\eta_j c_j) \geqslant 0$, a contradiction. Hence, our claim is true.

(ii) $\Rightarrow$ (iii).
Let $(0,\hdots,0) = \sum\limits_{i=1}^{k} \alpha_{i} (c_{i1},\hdots, c_{id}),$ where $\sum\limits_{i=1}^{k} \alpha_{i} =1$, $\alpha_{i}\geqslant 0$ \text{ for all }$1\leqslant i \leqslant k$.
Then for $1\leqslant i \leqslant k$, there exists $(\phi_{i,m})_m\in \Lambda(\A)$ such that
$c_{ij}=\lim\limits_{m\rightarrow \infty} \langle X_j\phi_{i,m}, A_j\phi_{i,m}\rangle$ for all $  1\leqslant j \leqslant d$.
Let $g\in\mathcal{G},$ $\Gamma_{i}= (\phi_{i,m})_m$. For $\mathbf{Y}=(Y_1,\ldots,Y_d)\in \mathcal{B(H, H}^d)$, let $f_{g, \Gamma_{i}} (\mathbf{Y})=g\left(\left\{ \frac{\sum\limits_{j=1}^{d} \langle Y_j\phi_{i,m},\ A_j\phi_{i,m}\rangle}{\|\A\|}\right\}\right)$.
Define  $f = \sum\limits_{i=1}^{k} \alpha_{i} f_{g, \Gamma_{i}}.$
Then, by Corollary \ref{BHdsubdiff}, $f \in \partial \|\A\|$.
 Also for each $(\lambda_1,\hdots, \lambda_d)\in \mathbb{C}^d$,
\begin{eqnarray*}\notag
\begin{split}
 f (
\lambda_1 X_1,
\ldots,
\lambda_d X_d
)&=  \sum\limits_{i=1}^{k} \alpha_{i}\ g\left(\left\{ \frac{\sum\limits_{j=1}^{d} \langle \lambda_j X_j\phi_{i,m},\ A_j\phi_{i,m}\rangle}{\|\A\|}\right\}\right)\hspace{.2 cm}\\
&=\frac{1}{\|\A\|} \sum\limits_{i=1}^{k} \alpha_{i} (\lambda_1 c_{i1} +\ldots + \lambda_d c_{id})\\
&=0. \hspace{5.6 cm}
\end{split}
\end{eqnarray*}
Therefore, $(0,\hdots,0)=S^*(f) \in S^*(\partial \|\A\|).$

(iii) $\Rightarrow$ (i).
Since $(0,\hdots,0)\in S^*(\partial \|\A\|)$, there exists $f\in \partial \|\A\|$ such that $$f(
\lambda_1 X_1,\ldots,\lambda_d X_d) = 0  \text{ for all }(\lambda_1, \hdots, \lambda_d)\in \mathbb{C}^d.$$
For $\boldsymbol{\lambda}=(\lambda_1, \hdots, \lambda_d)\in\mathbb{C}^d$,
$$\|\A+\boldsymbol{\lambda} \X\|\geqslant f(\A+\boldsymbol{\lambda} \X)= f(
A_1,\ldots,A_d) + f (\lambda_1 X_1,\ldots,\lambda_d X_d
)=\|\A\|.$$
\end{proof}

\section{$\epsilon$-Birkhoff orthogonality in $\mathcal B(\mathcal H, \mathcal K)$}
We present a characterization for $\epsilon$-Birkhoff orthogonality \eqref{sub_ortho_def} to a subspace in Theorem \ref{subspace_epsilon1}. To see that, we recall the following results from subdifferential calculus, which will be useful in the subsequent discussion. 

\begin{proposition}\label{basic_minma_subd} 
	Let $\mathcal{X}$ be a normed space. 
	A continuous convex function $f:\mathcal{X}\to \mathbb{R}$ attains its minima at $a\in\mathcal{X}$ if and only if $0\in \partial f(a).$
\end{proposition}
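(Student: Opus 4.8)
The plan is to prove this directly from the definition of the subdifferential of a convex function, which generalizes the subdifferential of the norm recalled in the Introduction. For a continuous convex function $f:\mathcal{X}\to\mathbb{R}$, the subdifferential at $a$ is
$$\partial f(a)=\{x^*\in\mathcal X^*:\ \mathrm{Re}\,x^*(x-a)\leqslant f(x)-f(a)\ \text{ for all } x\in\mathcal X\},$$
which agrees with the description through the right hand derivative $f'_{+}(a)$, since convexity gives $f'_{+}(a)(x)=\inf_{t>0}\frac{f(a+tx)-f(a)}{t}$ and hence $\partial f(a)=\{x^*:\ \mathrm{Re}\,x^*(x)\leqslant f'_{+}(a)(x)\ \text{for all }x\}$, exactly parallel to the norm case. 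The whole statement is then Fermat's rule in the convex setting, and the subdifferential is precisely the object that encodes global minimality as a support inequality.

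First I would prove the forward implication. Suppose $f$ attains its minimum at $a$, so that $f(x)\geqslant f(a)$ for every $x\in\mathcal X$. Choosing the functional $x^*=0$, the defining inequality $\mathrm{Re}\,0(x-a)=0\leqslant f(x)-f(a)$ holds trivially for all $x$, and therefore $0\in\partial f(a)$.

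For the converse, suppose $0\in\partial f(a)$. Reading the defining inequality with $x^*=0$ gives $0=\mathrm{Re}\,0(x-a)\leqslant f(x)-f(a)$ for every $x\in\mathcal X$, i.e. $f(x)\geqslant f(a)$ for all $x$, so $a$ is a global minimizer. If one instead works from the directional-derivative form of the subdifferential, then $0\in\partial f(a)$ says $f'_{+}(a)(x)\geqslant 0$ for every $x$, and minimality is recovered from the monotonicity of $t\mapsto\frac{f(a+tx)-f(a)}{t}$: for $t=1$ and $x=y-a$ this yields $f(y)-f(a)\geqslant f'_{+}(a)(y-a)\geqslant 0$.

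The hard part is essentially nonexistent; the only point requiring care is to confirm that the two standard descriptions of $\partial f(a)$ coincide, which rests on the convexity-driven monotonicity of the difference quotient used above. Since the continuity and convexity of $f$ guarantee that $f'_{+}(a)$ exists and is finite in every direction, no further analytic input is needed, and combining the two implications completes the proof.
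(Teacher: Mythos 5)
Your proof is correct: both implications follow immediately from the global-inequality definition of the subdifferential by taking $x^*=0$, which is exactly the canonical argument (Fermat's rule), and the paper itself states this proposition without proof, merely recalling it as a standard fact from subdifferential calculus. Your digression on reconciling the two descriptions of $\partial f(a)$ via monotonicity of the difference quotient is accurate but not actually needed, since neither direction of your argument uses the right-hand-derivative form.
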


The following proposition follows from Theorem 2.4.2 and Theorem  2.4.14 of \cite{Zalinescu}. See also \cite{HuL} for the case of $\mathbb{R}^n$ and \cite{G1, Thibault} for more general spaces.

\begin{proposition}\label{Gprop}
	\cite{Zalinescu}
	Let $\mathcal{X}$ and $\mathcal{Y}$ be normed spaces. Consider a bounded linear map  $S: \mathcal{X} \rightarrow \mathcal{Y}$,  continuous affine map $L:\mathcal{X} \rightarrow \mathcal{Y}$ defined by $L(x)=S(x)+y_0$ for some $y_0\in \mathcal{Y}$ and a continuous convex function $g: \mathcal{Y} \rightarrow \mathbb{R}$. Then $ \partial\big( g\, \circ \,L \big)(a) = S^*\partial g (L(a))$ for all $a\in \mathcal{X}$.
\end{proposition}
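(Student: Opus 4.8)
The statement is the standard convex chain rule for composition with an affine map, so the plan is to prove the two inclusions of $\partial(g\circ L)(a) = S^*\partial g(L(a))$ separately: the inclusion $\supseteq$ is a one-line consequence of the definition of the subdifferential, while $\subseteq$ is where the work lies and will rest on a Hahn--Banach extension.

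For $\supseteq$, I would take $y^* \in \partial g(L(a))$ and verify directly that $S^* y^* \in \partial(g\circ L)(a)$. Since $L$ is affine, $L(x) - L(a) = S(x-a)$ for every $x\in\mathcal X$, so the defining inequality $\Re\, y^*(L(x)-L(a)) \le g(L(x)) - g(L(a))$ turns into $\Re\,(S^* y^*)(x-a) \le (g\circ L)(x) - (g\circ L)(a)$, which is exactly $S^* y^* \in \partial(g\circ L)(a)$. No completeness is used here, so this half is immediate in any normed space.

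For $\subseteq$, the task is to show that every $x^* \in \partial(g\circ L)(a)$ has the form $S^* y^*$ with $y^* \in \partial g(L(a))$. First I would pass to the directional derivative $p := g'(L(a);\cdot)$, which exists, is finite, sublinear, and continuous because the continuous convex $g$ is locally Lipschitz near $L(a)$; affineness of $L$ gives $(g\circ L)'(a;v) = g'(L(a);S(v)) = p(S(v))$. Substituting $x = a+tv$ into the subgradient inequality and letting $t\to 0^+$ yields $\Re\,x^*(v) \le p(S(v))$ for all $v\in\mathcal X$. From this I would deduce that $x^*$ vanishes on $\ker S$: if $Sv = 0$ the right-hand side is $p(0)=0$, and applying the inequality to $v,-v,iv,-iv$ (all in $\ker S$) forces $\Re\,x^*(v)=0$ and $\Im\,x^*(v)=0$. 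Hence $x^*$ factors through $S$, i.e.\ there is a well-defined linear functional $\tilde y^*$ on the range $S(\mathcal X)$ with $\tilde y^*(S(v)) = x^*(v)$, and it satisfies $\Re\,\tilde y^* \le p$ on $S(\mathcal X)$.

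The final and main step is to extend $\tilde y^*$ to a continuous $y^*$ on all of $\mathcal Y$ with $\Re\,y^* \le p$ everywhere: I would apply the real Hahn--Banach theorem to the real-linear functional $\Re\,\tilde y^*$ dominated by the real-sublinear $p$, obtaining a real-linear extension $U$, and then recover a complex-linear $y^*(w) = U(w) - iU(iw)$; its continuity is automatic from $|\Re\, y^*(w)| \le \max\{p(w),p(-w)\} \le C\|w\|$. Since $p(w) = g'(L(a);w) \le g(L(a)+w) - g(L(a))$ by monotonicity of the difference quotients, the bound $\Re\,y^* \le p$ gives precisely $y^* \in \partial g(L(a))$, while $y^*|_{S(\mathcal X)} = \tilde y^*$ gives $S^* y^* = x^*$. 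I expect this Hahn--Banach extension together with the factorization through $S$ to be the only delicate point, with the complex scalars requiring extra care in both the kernel argument and the complexification of the extension; everything else is bookkeeping with the definitions, and as all tools used hold in an arbitrary normed space, the Banach-space hypothesis of \cite{G1, HuL} is not needed.
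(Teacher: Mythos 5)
Your proof is correct, and it is worth noting that the paper itself offers no proof of this proposition at all: it is quoted with the citation \cite{G1, HuL}, accompanied only by the remark that \cite{HuL} treats $\mathbb R^n$, that \cite{G1} extends it to Banach spaces, and that the argument works in any normed space. So there is no in-paper proof to match; comparing instead with the cited sources, your route is slightly different from the usual one. The standard proof (as in \cite{HuL}, and in the Banach-space version) goes through support functions: one shows that the directional derivative $v \mapsto g'(L(a); S(v))$ is the support function of both $\partial(g \circ L)(a)$ and $S^*\partial g(L(a))$, and then concludes equality of the two sets from the fact that $\partial g(L(a))$ is weak$^*$-compact convex (continuity of $g$), so that its image under the weak$^*$-to-weak$^*$ continuous map $S^*$ is again weak$^*$-compact, hence weak$^*$-closed --- the closedness being the crux, since without it only an inclusion of closures follows. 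Your argument replaces this compactness step with a direct Hahn--Banach extension: from $\Re\, x^*(v) \leqslant g'(L(a); S(v))$ you deduce that $x^*$ kills $\ker S$, factor it through the range of $S$, and extend the factored functional dominated by the sublinear $p = g'(L(a); \cdot)$, recovering a complex-linear $y^* \in \partial g(L(a))$ with $S^*y^* = x^*$. This is a legitimate and self-contained alternative; in effect you re-prove, in this special case, the weak$^*$-closedness that the support-function argument invokes abstractly. Your handling of the complex scalars (testing $v, -v, iv, -iv$ in the kernel step, and complexifying the real Hahn--Banach extension via $y^*(w) = U(w) - iU(iw)$) is exactly the care this setting requires, since the paper applies the proposition over $\mathbb C$, and your observation that completeness is never used confirms the paper's unproved remark that the Banach hypothesis of the references is superfluous. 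All individual steps check out: local Lipschitzness of a continuous convex function justifies finiteness and continuity of $p$, monotonicity of difference quotients gives $p(w) \leqslant g(L(a)+w) - g(L(a))$, and the restriction of $y^*$ to $S(\mathcal X)$ agrees with the factored functional because $S(\mathcal X)$ is a complex subspace.
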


\begin{proposition}\label{sum}	\cite{Zalinescu}
	Let $f_1, f_2 : \mathcal X \rightarrow \mathbb R$ be two continuous convex functions. Then for $a\in \mathcal{X}$, 
	$$\partial(f_1+f_2)(a)=\partial f_1(a)+\partial f_2(a).$$
\end{proposition}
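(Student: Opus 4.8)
The plan is to prove the two inclusions of this Moreau--Rockafellar identity separately, recalling that for a continuous convex $f:\mathcal X\to\mathbb R$ the subdifferential at $a$ is
$$\partial f(a)=\{x^*\in\mathcal X^*:\ \mathrm{Re}\,x^*(x-a)\leqslant f(x)-f(a)\ \text{ for all }x\in\mathcal X\}.$$
The inclusion $\partial f_1(a)+\partial f_2(a)\subseteq\partial(f_1+f_2)(a)$ is immediate: if $x_1^*\in\partial f_1(a)$ and $x_2^*\in\partial f_2(a)$, then adding the two defining inequalities yields $\mathrm{Re}\,(x_1^*+x_2^*)(x-a)\leqslant (f_1+f_2)(x)-(f_1+f_2)(a)$ for every $x$, so $x_1^*+x_2^*\in\partial(f_1+f_2)(a)$.

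The substance lies in the reverse inclusion, which I would handle by a separation (sandwich) argument. Fix $x^*\in\partial(f_1+f_2)(a)$ and define the continuous convex function $p(x)=f_1(x)-f_1(a)-\mathrm{Re}\,x^*(x-a)$ together with the continuous concave function $q(x)=f_2(a)-f_2(x)$; both vanish at $a$, and the subgradient inequality for $x^*$ rearranges exactly to $p(x)\geqslant q(x)$ for all $x$. I would then separate the epigraph $\{(x,t):t\geqslant p(x)\}$, which is convex with nonempty interior since $p$ is continuous, from the hypograph $\{(x,t):t\leqslant q(x)\}$. Because $p\geqslant q$ everywhere, the open set $\{(x,t):t>p(x)\}$ is disjoint from the hypograph, so the geometric Hahn--Banach theorem supplies a closed separating hyperplane in $\mathcal X\times\mathbb R$. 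The interior point forces this hyperplane to be non-vertical, so it is the graph of a continuous affine function $\ell$ with $p(x)\geqslant\ell(x)\geqslant q(x)$ for all $x$; evaluating at $a$ gives $\ell(a)=0$, whence $\ell(x)=\mathrm{Re}\,z^*(x-a)$ for a suitable $z^*\in\mathcal X^*$.

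Reading off the two halves of the sandwich then finishes the argument: $p(x)\geqslant\mathrm{Re}\,z^*(x-a)$ says $f_1(x)-f_1(a)\geqslant\mathrm{Re}\,(x^*+z^*)(x-a)$, i.e. $x^*+z^*\in\partial f_1(a)$, while $\mathrm{Re}\,z^*(x-a)\geqslant q(x)$ says $f_2(x)-f_2(a)\geqslant\mathrm{Re}\,(-z^*)(x-a)$, i.e. $-z^*\in\partial f_2(a)$; hence $x^*=(x^*+z^*)+(-z^*)\in\partial f_1(a)+\partial f_2(a)$.

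The main obstacle is the separation step, where two points need care. First, one must verify that the separating hyperplane can be taken non-vertical; this is precisely where continuity of $f_1$ (equivalently, the nonempty interior of the epigraph of $p$) is used, and without such a hypothesis the sum rule can fail. Second, since the scalars are complex while the subgradient inequalities constrain only real parts, I would run the separation with $\mathcal X$ viewed as a real normed space to obtain a real-linear continuous functional $\varphi$, and then lift it to a complex-linear $z^*\in\mathcal X^*$ with the same real part via $z^*(x)=\varphi(x)-i\varphi(ix)$. Everything else is a routine rearrangement of the defining inequalities.
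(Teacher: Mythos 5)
Your proof is correct, but note that the paper itself gives no proof of this proposition: it is quoted from Hiriart-Urruty--Lemar\'echal (for $\mathbb R^n$) and from \cite{G1} (for Banach spaces, with the remark that the argument works in any normed space). The standard proof in those sources runs through the max formula: the support function of $\partial f(a)$ is the directional derivative $f'_+(a,\cdot)$, directional derivatives add, $(f_1+f_2)'_+(a,\cdot)=(f_1)'_+(a,\cdot)+(f_2)'_+(a,\cdot)$, and one then identifies $\partial(f_1+f_2)(a)$ with $\partial f_1(a)+\partial f_2(a)$ using that each subdifferential of a continuous convex function is convex and weak$^*$-compact, so the sum is weak$^*$-closed and two closed convex sets with the same support function coincide. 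Your route is the direct sandwich argument: the easy inclusion by adding subgradient inequalities, and the hard inclusion by separating the strict epigraph of $p(x)=f_1(x)-f_1(a)-\mathrm{Re}\,x^*(x-a)$ from the hypograph of $q(x)=f_2(a)-f_2(x)$, using continuity to rule out a vertical separating hyperplane (if the hyperplane were vertical, the functional would be bounded above and below on all of $\mathcal X$, since $p$ and $q$ are finite everywhere, hence zero). This is self-contained, needs only geometric Hahn--Banach rather than any duality between support functions and weak$^*$-compact convex sets, and you correctly handle the complex-scalar issue by separating over the real restriction and lifting via $z^*(x)=\varphi(x)-i\varphi(ix)$; the trade-off is that the support-function proof generalizes more readily to statements where only closures of sums are obtained (e.g.\ when neither summand is continuous), while yours exploits continuity of both summands, which is all the paper needs.
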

 
\begin{theorem}\label{subspace_epsilon1}
	Let $\epsilon\in[0,1).$ Let $x\in\mathcal{X},$ and $\mathcal{W}$ be a subspace of $\mathcal{X}.$ Then,
	$x\perp_B^\epsilon\mathcal{W}$ if and only if there exists  $f\in\partial\|x\|$ such that $\|f|_\mathcal{W}\|\leqslant \epsilon.$
	
	\end{theorem}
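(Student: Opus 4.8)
The plan is to recast $\epsilon$-Birkhoff orthogonality to $\mathcal{W}$ as an unconstrained minimization of a convex function and then read off the subdifferential via the calculus rules recorded in Propositions \ref{basic_minma_subd}, \ref{Gprop} and \ref{sum}. Throughout I take $x\neq 0$ (the degenerate case $x=0$ holds trivially on both sides). Define $g:\mathcal{W}\to\mathbb{R}$ by
$$g(w)=\|x+w\|^2+2\epsilon\|x\|\,\|w\|.$$
Since $w\mapsto\|x+w\|^2$ and $w\mapsto\|w\|$ are continuous and convex on $\mathcal{W}$, so is $g$. Unwinding the defining inequality \eqref{sub_ortho_def}, one sees that $x\perp_B^\epsilon\mathcal{W}$ holds if and only if $g(w)\geqslant\|x\|^2=g(0)$ for every $w\in\mathcal{W}$, that is, if and only if $g$ attains its minimum over $\mathcal{W}$ at $0$. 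By Proposition \ref{basic_minma_subd}, this is equivalent to $0\in\partial g(0)$.

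Next I would compute $\partial g(0)$ by splitting $g=g_1+g_2$ with $g_1(w)=\|x+w\|^2$ and $g_2(w)=2\epsilon\|x\|\,\|w\|$ and applying the sum rule (Proposition \ref{sum}): $\partial g(0)=\partial g_1(0)+\partial g_2(0)$. For $g_1$, write $g_1=\psi\circ L$, where $L(w)=x+w$ is the affine map built from the inclusion $\iota:\mathcal{W}\hookrightarrow\mathcal{X}$ and $\psi(y)=\|y\|^2$ on $\mathcal{X}$. Proposition \ref{Gprop} then gives $\partial g_1(0)=\iota^*\,\partial\psi(x)$, where $\iota^*:\mathcal{X}^*\to\mathcal{W}^*$ is the restriction map $h\mapsto h|_{\mathcal{W}}$. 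For $g_2$, the subdifferential of the norm of $\mathcal{W}$ at $0$ is the closed dual unit ball, so $\partial g_2(0)=2\epsilon\|x\|\,B_{\mathcal{W}^*}$.

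The one genuine computation is $\partial\psi(x)=2\|x\|\,\partial\|x\|$ for $x\neq 0$, which I expect to be the main obstacle together with the bookkeeping of the restriction maps between $\mathcal{X}^*$ and $\mathcal{W}^*$. This is the chain rule for the increasing convex outer function $t\mapsto t^2$ composed with the norm; I would either cite it from \cite{HuL} or verify it directly from the definition (the inclusion $\supseteq$ uses the defining inequality of $\partial\|x\|$ together with the elementary bound $2\|x\|\,\mathrm{Re}\,f(y)\leqslant\|x\|^2+\|y\|^2$, and the reverse inclusion follows by testing the subgradient inequality along the rays $y=tx$ to force $\mathrm{Re}\,h(x)=2\|x\|^2$ and along scaled unit vectors to force $\|h\|\leqslant 2\|x\|$, which together give $h=2\|x\|f$ with $f\in\partial\|x\|$). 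Granting this, $\partial g_1(0)=2\|x\|\{f|_{\mathcal{W}}:f\in\partial\|x\|\}$, so that
$$\partial g(0)=2\|x\|\{f|_{\mathcal{W}}:f\in\partial\|x\|\}+2\epsilon\|x\|\,B_{\mathcal{W}^*}.$$

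Finally I would translate $0\in\partial g(0)$ back into the stated condition. Membership means there exist $f\in\partial\|x\|$ and $\phi\in\mathcal{W}^*$ with $\|\phi\|\leqslant 1$ such that $2\|x\|\,f|_{\mathcal{W}}+2\epsilon\|x\|\,\phi=0$; dividing by $2\|x\|>0$ yields $f|_{\mathcal{W}}=-\epsilon\phi$, whence $\|f|_{\mathcal{W}}\|=\epsilon\|\phi\|\leqslant\epsilon$. Conversely, given $f\in\partial\|x\|$ with $\|f|_{\mathcal{W}}\|\leqslant\epsilon$, set $\phi=-\epsilon^{-1}f|_{\mathcal{W}}$ when $\epsilon>0$ (so $\|\phi\|\leqslant 1$) and $\phi=0$ when $\epsilon=0$ (in which case $f|_{\mathcal{W}}=0$); in either case $2\|x\|\,f|_{\mathcal{W}}+2\epsilon\|x\|\,\phi=0\in\partial g(0)$, so $0$ minimizes $g$ and hence $x\perp_B^\epsilon\mathcal{W}$. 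Once the squared-norm subdifferential and the restriction maps are handled, the equivalence is purely formal.
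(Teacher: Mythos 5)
Your proposal is correct and takes essentially the same route as the paper's proof: recast $x\perp_B^\epsilon\mathcal{W}$ as the statement that the convex function $w\mapsto\|x+w\|^2+2\epsilon\|x\|\,\|w\|$ attains its minimum on $\mathcal{W}$ at $0$, then apply Propositions \ref{basic_minma_subd}, \ref{Gprop} and \ref{sum} to obtain $0\in 2\|x\|\,\{f|_\mathcal{W}:f\in\partial\|x\|\}+2\epsilon\|x\|\,\{g\in\mathcal{W}^*:\|g\|\leqslant 1\}$ and translate back. The only difference is that you spell out steps the paper leaves implicit, namely the identity $\partial\left(\|\cdot\|^2\right)(x)=2\|x\|\,\partial\|x\|$ and the degenerate cases $x=0$ and $\epsilon=0$.
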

	\begin{proof}
			Let $ S:\mathcal{W}\to \mathcal{X} $ be inclusion map and $L:\mathcal{W} \to \mathcal{X} $ given by $ L(w)=S(w) + x $ for $w\in\mathcal{W}.$ Additionally, consider the continuous convex function $ h_1:\mathcal{X}\to \mathbb{R} $ given by $ h_1(y)=\|y\|^2 $ and the function $ h_2:\mathcal{W}\to\mathbb{R}^+ $ given by $ h_2(w)=2\epsilon\|x\|\,\|w\|.$ Since $x\perp_B^\epsilon\mathcal{W}$, it follows that $h_1\,\circ L+h_2$ attains its minimum at zero. Then, by  Proposition $\ref{basic_minma_subd}$, $\ref{Gprop}$ and $\ref{sum}$, we obtain $x\perp_B^\epsilon\mathcal{W}$ if and only if
		
			\begin{equation}\label{minimal_sub_equn1} 
			\begin{aligned}
				0 & \in  \partial\big( h_1\, \circ \,L+h_2 \big)(0)\\
				&= S^*\partial h_1(x)+\partial h_2(0)\\
				&= S^*\partial \|x\|^2+ 2 \epsilon \|x\| \{g\in\mathcal{W}^*:\|g\|\leqslant 1\}\\
				&= 2 \|x\|\, S^*\partial (\|x\|)+2 \epsilon \|x\|\{g\in\mathcal{W}^*:\|g\|\leqslant 1\}\\
				&=2\|x\|\,\{f|_\mathcal{W}:f\in\partial\|x\|\}+2 \epsilon \|x\|\{g\in\mathcal{W}^*:\|g\|\leqslant 1\}.
			\end{aligned}
			\end{equation}
			This implies $x\perp_B^\epsilon\mathcal{W}$ if and only if there exist $f\in\partial\|x\|$ and $g\in\mathcal{W}^*,$ $\|g\|\leqslant 1$ such that $f|_\mathcal{W}+g=0.$ This implies that $g=\frac{-f|_\mathcal{W}}{\epsilon}$ for $\epsilon\neq 0,$ and $\|f|_\mathcal{W}\|\leqslant \epsilon.$ This completes the proof.
	\end{proof}
As a consequence of above theorem, we get \cite[Theorem 2.3]{C1} and \cite[Theorem 2.2]{PW4} as follows.
	\begin{corollary}\cite{C1,PW4} 
		Let $\mathcal{X}$ be a real or complex normed space. Let $x,y\in\mathcal{X} $ and $\epsilon\in[0,1).$ Then, $x\perp_B^\epsilon y$ if and only if there exists $f\in\partial \|x\|$ such that $|f(y)|\leqslant \epsilon \|y\|.$
	\end{corollary}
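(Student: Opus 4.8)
The plan is to derive this corollary as the special case of Theorem~\ref{subspace_epsilon1} in which the subspace $\mathcal{W}$ is one-dimensional, namely $\mathcal{W} = \mathrm{span}\{y\}$. First I would observe that the definition of $\epsilon$-Birkhoff orthogonality to a subspace \eqref{sub_ortho_def}, when specialized to $\mathcal{W} = \mathrm{span}\{y\}$, recovers exactly the original pointwise definition $x \perp_B^\epsilon y$: for $w = \lambda y$ with $\lambda \in \mathbb{C}$ (or $\lambda \in \mathbb{R}$ in the real case), the inequality $\|x+w\|^2 \geqslant \|x\|^2 - 2\epsilon\|x\|\|w\|$ becomes precisely $\|x+\lambda y\|^2 \geqslant \|x\|^2 - 2\epsilon\|x\|\|\lambda y\|$ for all scalars $\lambda$. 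Hence $x \perp_B^\epsilon y$ if and only if $x \perp_B^\epsilon \mathrm{span}\{y\}$, and Theorem~\ref{subspace_epsilon1} applies verbatim.

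Next I would invoke Theorem~\ref{subspace_epsilon1} to get that $x \perp_B^\epsilon \mathrm{span}\{y\}$ holds if and only if there exists $f \in \partial\|x\|$ with $\|f|_{\mathcal{W}}\| \leqslant \epsilon$, where $\mathcal{W} = \mathrm{span}\{y\}$. The remaining work is purely to translate the condition $\|f|_{\mathcal{W}}\| \leqslant \epsilon$ into the claimed form $|f(y)| \leqslant \epsilon\|y\|$. This is where I would compute the functional norm of the restriction to a one-dimensional space. Every element of $\mathcal{W}$ is of the form $\lambda y$, so
\begin{equation*}
\|f|_{\mathcal{W}}\| = \sup_{\lambda \neq 0} \frac{|f(\lambda y)|}{\|\lambda y\|} = \sup_{\lambda \neq 0} \frac{|\lambda|\,|f(y)|}{|\lambda|\,\|y\|} = \frac{|f(y)|}{\|y\|}
\end{equation*}
when $y \neq 0$. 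Therefore $\|f|_{\mathcal{W}}\| \leqslant \epsilon$ is equivalent to $|f(y)| \leqslant \epsilon\|y\|$, which is exactly the desired characterization.

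The only genuinely delicate point — and the main thing to handle carefully rather than a deep obstacle — is the degenerate case $y = 0$. When $y = 0$, the subspace $\mathcal{W}$ is trivial, both the relation $x \perp_B^\epsilon y$ and the conclusion $|f(y)| \leqslant \epsilon\|y\|$ hold automatically for any $f \in \partial\|x\|$ (and $\partial\|x\|$ is nonempty whenever $x \neq 0$ by the Hahn--Banach theorem), so the equivalence is vacuously true; I would dispose of this at the outset. I would also note that Theorem~\ref{subspace_epsilon1} was proved for arbitrary normed spaces, so no additional assumptions on $\mathcal{X}$ are needed, and that the argument is insensitive to whether $\mathcal{X}$ is real or complex, matching the statement of the corollary. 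With these remarks, the proof is essentially a one-line specialization followed by the elementary restriction-norm computation above.
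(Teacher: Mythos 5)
Your proposal is correct and matches the paper's approach exactly: the paper derives this corollary directly as the specialization of Theorem \ref{subspace_epsilon1} to $\mathcal{W}=\mathrm{span}\{y\}$, with the identification $\|f|_{\mathcal{W}}\|=|f(y)|/\|y\|$ implicit. Your explicit treatment of the restriction-norm computation and the degenerate case $y=0$ is a welcome (if routine) elaboration of what the paper leaves unstated.
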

		Let $\mathscr{T}(\H)$ denote the set of trace class operators on $\H.$ For $x\in\H$ and $y\in\K,$ let $y\otimes x\in\mathcal{B(H,K)}$ denote the rank one operator defined as $(y\otimes x)(z)=\langle x,z\rangle y$. The following theorem gives a characterization of $\epsilon$-Birkhoff orthogonality of a compact operator to a subspace in $\mathcal{B(H,K)}.$
	\begin{theorem}\label{epsilonBirsub_BHK}
		Let $A\in\mathcal{B(H,K)}$ be a compact operator. Let $\mathcal{W}$ be a subspace of $\mathcal{B(H,K)}.$ Let $\epsilon\in[0,1)$. Then $A\perp_B^\epsilon \mathcal{W}$ if and only if there exists a positive finite rank operator $P\in\mathscr{T}(\H)$ such that $\tr(P)=1,$  $A^*AP=\|A\|^2P$ and $|\tr(A^*XP)|\leqslant \epsilon \|A\|\|X\| \text{ for all } X\in\mathcal{W}.$  
	\end{theorem}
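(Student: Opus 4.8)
The plan is to combine the abstract subspace characterization of Theorem \ref{subspace_epsilon1} with the explicit description of the subdifferential of a compact operator's norm in Corollary \ref{subdiff_comp_1}, and then repackage the resulting functional-analytic condition into the trace-theoretic language of the statement.

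First I would apply Theorem \ref{subspace_epsilon1} with $\mathcal{X}=\mathcal{B(H,K)}$ and $x=A$, which reduces $A\perp_B^\epsilon\mathcal{W}$ to the existence of some $f\in\partial\|A\|$ with $\|f|_\mathcal{W}\|\leqslant\epsilon$. Since $A$ is a nonzero compact operator, Corollary \ref{subdiff_comp_1} guarantees that every such $f$ is a finite convex combination $f=\sum_{i=1}^k\alpha_i f_{\phi_i}$, where $\alpha_i\geqslant 0$, $\sum_i\alpha_i=1$, each $\phi_i$ is a unit vector with $A^*A\phi_i=\|A\|^2\phi_i$, and $f_{\phi_i}(X)=\frac{1}{\|A\|}\langle X\phi_i,A\phi_i\rangle$. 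The finiteness of the combination is exactly what will produce a \emph{finite-rank} $P$, and it is available because compactness of $A$ makes the eigenspace of $A^*A$ for $\|A\|^2$ finite dimensional.

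The heart of the argument is a correspondence between these functionals and the operators $P$ of the statement. Given $f$ as above, I would set $P=\sum_{i=1}^k\alpha_i\,(\phi_i\otimes\phi_i)$, a positive finite-rank operator, hence in $\mathscr{T}(\H)$. Routine computations give $\tr(P)=\sum_i\alpha_i=1$; the eigenrelation $A^*A\phi_i=\|A\|^2\phi_i$ gives $A^*A(\phi_i\otimes\phi_i)=\|A\|^2(\phi_i\otimes\phi_i)$ and therefore $A^*AP=\|A\|^2P$; and expanding the trace yields $\tr(A^*XP)=\sum_i\alpha_i\langle A\phi_i,X\phi_i\rangle=\|A\|\,\overline{f(X)}$, so that $|f(X)|=\frac{1}{\|A\|}|\tr(A^*XP)|$ for every $X$. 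Consequently $\|f|_\mathcal{W}\|\leqslant\epsilon$ is precisely the condition $|\tr(A^*XP)|\leqslant\epsilon\|A\|\|X\|$ for all $X\in\mathcal{W}$, which settles the forward implication. For the converse I would start from a given $P$ and recover $f$: applying the spectral theorem to the positive finite-rank $P$, write $P=\sum_i\alpha_i(\phi_i\otimes\phi_i)$ with $\alpha_i>0$ and $\{\phi_i\}$ orthonormal, so the trace condition forces $\sum_i\alpha_i=1$. The key point is that $A^*AP=\|A\|^2P$ forces $A^*A\phi_i=\|A\|^2\phi_i$ for each $i$: since $P\phi_i=\alpha_i\phi_i$, each $\phi_i$ lies in the range of $P$, and $A^*A$ coincides with $\|A\|^2\,\mathrm{Id}$ there. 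Then $f:=\sum_i\alpha_i f_{\phi_i}$ lies in $\partial\|A\|$ by Corollary \ref{subdiff_comp_1}, the same trace identity gives $\|f|_\mathcal{W}\|\leqslant\epsilon$, and Theorem \ref{subspace_epsilon1} yields $A\perp_B^\epsilon\mathcal{W}$.

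I expect the main obstacle to be the converse direction, specifically verifying rigorously that the operator identity $A^*AP=\|A\|^2P$ propagates to the individual spectral eigenvectors $\phi_i$ of $P$ (rather than merely holding in an averaged sense), together with keeping the inner-product and trace conventions consistent so that the complex conjugation in $f(X)=\frac{1}{\|A\|}\overline{\tr(A^*XP)}$ is correctly seen to be immaterial once one passes to moduli.
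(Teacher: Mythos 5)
Your proof is correct, and your forward direction coincides with the paper's: both pass through Theorem \ref{subspace_epsilon1}, decompose $f=\sum_{i}\alpha_i f_{\phi_i}$ via Corollary \ref{subdiff_comp_1}, set $P=\sum_i \alpha_i\,\phi_i\otimes\phi_i$, and translate $\|f|_{\mathcal{W}}\|\leqslant\epsilon$ into the trace condition. The converse is where you genuinely diverge. You invert the dictionary: from $P$ you recover unit eigenvectors via the spectral decomposition with $\alpha_i>0$, propagate $A^*AP=\|A\|^2P$ to each $\phi_i$ (your argument --- apply both sides to $\phi_i$ and divide by $\alpha_i$ --- is complete), rebuild $f=\sum_i\alpha_i f_{\phi_i}\in\partial\|A\|$, and invoke Theorem \ref{subspace_epsilon1} a second time. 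The paper instead proves the converse directly, never returning to the subdifferential: using positivity of $P$, $\tr(P)=1$, and trace duality $|\tr(YP)|\leqslant\|Y\|$ (from $\mathscr{T}(\H)^*=\mathcal{B(H)}$) applied to $(A+X)^*(A+X)$, it estimates $\|A+X\|^2\geqslant \tr(A^*AP)+2\,\Re\,\tr(A^*XP)+\tr(X^*XP)\geqslant \|A\|^2-2\epsilon\|A\|\|X\|$. Notice that the paper's converse uses only the averaged relation $\tr(A^*AP)=\|A\|^2$, never the individual eigenrelations $A^*A\phi_i=\|A\|^2\phi_i$, and does not actually use compactness of $A$ in that direction --- so it is more elementary and slightly more robust; your route buys a cleaner structural statement, a two-way correspondence between subgradients $f$ and the density-type operators $P$, which makes the theorem a pure corollary of Theorem \ref{subspace_epsilon1} together with Corollary \ref{subdiff_comp_1}. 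Your caution about the conjugation ($\tr(A^*XP)$ versus $f(X)$, depending on the trace and inner-product conventions) is warranted --- the paper writes $f(X)=\frac{1}{\|A\|}\tr(A^*XP)$ without comment --- and, as you observe, it is immaterial once moduli are taken, since hypothesis and conclusion involve only $|\tr(A^*XP)|$. (One common tacit assumption: both you and the paper need $A\neq 0$ to apply Corollary \ref{subdiff_comp_1}; the case $A=0$ is trivially true on both sides.)
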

	\begin{proof}
		Suppose $A\perp_B^\epsilon \mathcal{W},$ then by Theorem \ref{subspace_epsilon1},
		there exists a $f\in\partial\|A\|$ such that $\|f|_\mathcal{W}\|\leqslant \epsilon.$ So, by Corollary \ref{subdiff_comp_1}, there exist numbers $\alpha_1,\ldots,\alpha_{k}$ with $0\leqslant \alpha_j\leqslant 1,$ $\sum\limits_{j=1}^{k}\alpha_j=1,$ and for each $1\leqslant j\leqslant k$, there exists a unit vector $\phi_j\in \H$ satisfying 
		$A^*A\phi_j=\|A\|^2\phi_j$ such that \begin{equation}\label{subBHkeq1}f=\alpha_1f_{\phi_1}+\cdots+\alpha_kf_{\phi_k}.\end{equation}
		Let $P=\alpha_1 \phi_1\otimes\phi_1+\cdots+\alpha_k\phi_k\otimes\phi_k.$ Then, by \eqref{subBHkeq1}, we get $$f(X)=\frac{1}{\|A\|}\sum\limits_{j=1}^k\alpha_j\langle X\phi_j,A\phi_j\rangle=\frac{1}{\|A\|}\tr(A^*XP)\text{ for all }X\in\mathcal{B(H,K)}.$$ Note that $P$ is positive with $\tr(P)=1$ and  $A^*AP=\|A\|^2P.$ Also, $\|f|_\mathcal{W}\|\leqslant \epsilon$ implies that  $\frac{1}{\|A\|}\left|\tr(A^*\frac{X}{\|X\|}P)\right|\leqslant \epsilon$ for all $X\in\mathcal{W}.$ 
		
		Conversely, suppose there exists a positive finite rank operator $P\in\mathscr{T}(\H)$ such that $\tr(P)=1,$  $A^*AP=\|A\|^2P$ and $|\tr(A^*XP)|\leqslant \epsilon \|A\|\|X\| \text{ for all } X\in\mathcal{W}.$ Since $P\in\mathscr{T}(\H)$ is positive finite rank operator, by spectral theorem, there exist orthonormal vectors $\psi_1,\ldots,\psi_m$ such that \begin{equation}\label{new_P1}
			P=\lambda_1\psi_1\otimes\psi_1+\cdots+\lambda_m\psi_m\otimes\psi_m,
		\end{equation} where $0\leqslant\lambda_i\leqslant 1$ and $\sum\limits_{i=1}^m\lambda_i=1.$ So $P\in\mathscr{T}(\H)$ and $\tr(X^*XP)\geqslant 0$ for all $X\in\mathcal{B(H, K)}.$ Since $\mathscr{T}(\H)^*=\mathcal{B(H)},$ it implies that $\|X^*X\|\geqslant \tr(X^*XP)$ for all $X\in\mathcal{B(H,K)}.$ Thus, for $X\in\mathcal{W},$
		\begin{align*}
			\|A+X\|^2=&\|(A+X)^*(A+X)\|\\=&\|A^*A+(X^*A+A^*X)+X^*X\|\\
			\geqslant &\tr(A^*AP)+\tr(X^*AP+A^*XP)+\tr(X^*XP)\\
			\geqslant& \|A\|^2+2\ \Re\ \tr(A^*XP)\\
			\geqslant& \|A\|^2-2\ |\Re\ \tr(A^*XP)|\\
			\geqslant& \|A\|^2-2 \epsilon\|A\|\|X\|.\\
		\end{align*}
		This implies that $A\perp_B^\epsilon \mathcal{W}.$
		
	\end{proof}
	For $\H=\K=\C^n$ and $\epsilon=0,$ we get \cite[Theorem 1]{GSubspace} as the following corollary.
	\begin{corollary}\cite{GSubspace}
		Let $A\in\M_n(\C),$ the space of $n\times n$ complex matrices. Let $\mathcal{W}$ be a subspace of $\M_n(\C).$ Then $A\perp_B
\mathcal{W}$ if and only if there exists a density matrix $P$ such that $A^*AP=\|A\|^2P$ and $AP\in\mathcal{W}^\perp.$	\end{corollary}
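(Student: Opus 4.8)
The plan is to derive this corollary as the finite-dimensional, $\epsilon=0$ specialization of Theorem~\ref{epsilonBirsub_BHK}. First I would set $\H=\K=\C^n$ and record the simplifications this forces: every operator in $\M_n(\C)$ is compact, the trace-class ideal $\mathscr{T}(\C^n)$ is all of $\M_n(\C)$, and a positive operator $P$ with $\tr(P)=1$ is exactly a density matrix. Thus the existence of the positive finite-rank operator $P$ guaranteed by Theorem~\ref{epsilonBirsub_BHK} becomes the existence of a density matrix $P$, and the eigenvalue condition $A^*AP=\|A\|^2P$ carries over verbatim.

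Next I would put $\epsilon=0$. The inequality $|\tr(A^*XP)|\leqslant \epsilon\|A\|\,\|X\|$ then reads $\tr(A^*XP)=0$ for every $X\in\mathcal{W}$, and it remains only to recognize this as the statement $AP\in\mathcal{W}^\perp$, where $\mathcal{W}^\perp$ is taken in $\M_n(\C)$ with respect to the Hilbert--Schmidt inner product $\langle X,Y\rangle=\tr(Y^*X)$. Unwinding the definition, $AP\in\mathcal{W}^\perp$ means $\tr(X^*AP)=0$ for all $X\in\mathcal{W}$, so the task reduces to matching the condition $\tr(A^*XP)=0$ with the condition $\tr(X^*AP)=0$ across all of $\mathcal{W}$.

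The one computation I would actually carry out is the conjugation identity: since $P$ is positive, hence self-adjoint, and the trace is cyclic, $\overline{\tr(X^*AP)}=\tr\big((X^*AP)^*\big)=\tr(P^*A^*X)=\tr(A^*XP)$. Hence $\tr(A^*XP)$ and $\tr(X^*AP)$ are complex conjugates of one another and vanish simultaneously; running this over all $X\in\mathcal{W}$ shows that $\tr(A^*XP)=0$ on $\mathcal{W}$ if and only if $AP\in\mathcal{W}^\perp$. Combining the two directions of Theorem~\ref{epsilonBirsub_BHK} with this equivalence yields the corollary. I do not anticipate a genuine obstacle here, as the result is a direct specialization; the only point requiring care is making explicit the inner product defining $\mathcal{W}^\perp$ and verifying, via cyclicity and self-adjointness of $P$, that the two trace conditions coincide.
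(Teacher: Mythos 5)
Your proposal is correct and matches the paper's route exactly: the paper obtains this corollary as the direct specialization $\H=\K=\C^n$, $\epsilon=0$ of Theorem~\ref{epsilonBirsub_BHK}, with the positive trace-one finite-rank $P$ becoming a density matrix and the condition $\tr(A^*XP)=0$ on $\mathcal{W}$ identified with $AP\in\mathcal{W}^\perp$ under the Hilbert--Schmidt inner product. Your conjugation computation $\overline{\tr(X^*AP)}=\tr(PA^*X)=\tr(A^*XP)$, using self-adjointness of $P$ and cyclicity of the trace, is precisely the small verification the paper leaves implicit.
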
 
	
Let $\mathbb{D}$ denote the closed unit disc in $\C.$ The next theorem gives a characterization for $\epsilon$-Birkhoff orthogonality when $A\in\mathcal{B(H,K)}$ is compact.
	\begin{theorem}\label{epsilonBirkChara 1}
		Let $A,B\in\mathcal{B(H,K)}$ be such that $A$ is compact operator. Let $\epsilon\in[0,1)$. Then $A\perp_B^\epsilon B$ if and only if there exists a unit vector $\phi\in\H$ satisfying $A^*A\phi=\|A\|^2\phi$ such that 
		$$ |\langle  A^*B\phi,\ \phi \rangle|\leqslant \epsilon \|A\|\|B\|.$$

	\end{theorem}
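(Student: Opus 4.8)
The plan is to obtain this as the one-operator specialization of Theorem \ref{epsilonBirsub_BHK}, where the single genuinely new step is the passage from a density operator (a convex combination of eigenvectors) to a single eigenvector, achieved through the convexity of the numerical range. First I would record the elementary observation that, directly from \eqref{sub_ortho_def}, the relation $A\perp_B^\epsilon B$ coincides with $A\perp_B^\epsilon\operatorname{span}\{B\}$, since running the defining inequality over $w=\lambda B$, $\lambda\in\C$, is exactly the definition of $A\perp_B^\epsilon B$. Applying Theorem \ref{epsilonBirsub_BHK} to $\mathcal{W}=\operatorname{span}\{B\}$, and noting that by homogeneity the requirement $|\tr(A^*XP)|\leqslant\epsilon\|A\|\|X\|$ for all $X\in\operatorname{span}\{B\}$ reduces to the single instance $X=B$, I get: $A\perp_B^\epsilon B$ if and only if there is a positive finite-rank $P\in\mathscr{T}(\H)$ with $\tr(P)=1$, $A^*AP=\|A\|^2P$, and $|\tr(A^*BP)|\leqslant\epsilon\|A\|\|B\|$.

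The converse direction of the theorem is then immediate. Given a unit vector $\phi\in\H$ with $A^*A\phi=\|A\|^2\phi$ and $|\langle A^*B\phi,\phi\rangle|\leqslant\epsilon\|A\|\|B\|$, I would take $P=\phi\otimes\phi$, which is a positive rank-one operator with $\tr(P)=1$, satisfies $A^*AP=\|A\|^2P$, and has $\tr(A^*BP)=\langle A^*B\phi,\phi\rangle$. The characterization just recorded then yields $A\perp_B^\epsilon B$.

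The forward direction carries the real content, and the collapse of $P$ to a single vector is the step I expect to be the crux. Writing $N=\ker(A^*A-\|A\|^2 I)$, which is finite-dimensional because $A$ is compact, I would diagonalize $P$ as in \eqref{new_P1} to write $P=\sum_j\alpha_j\,\phi_j\otimes\phi_j$ with unit vectors $\phi_j\in N$, $\alpha_j\geqslant0$ and $\sum_j\alpha_j=1$, so that $\tr(A^*BP)=\sum_j\alpha_j\langle A^*B\phi_j,\phi_j\rangle$ is a convex combination of the numbers $\langle A^*B\phi_j,\phi_j\rangle$. Each such number lies in the numerical range of the compression $T=P_N A^*B|_N$ of $A^*B$ to $N$, since $\langle A^*B\phi,\phi\rangle=\langle T\phi,\phi\rangle$ for every $\phi\in N$ (using $P_N\phi=\phi$ and the self-adjointness of $P_N$). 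By the Toeplitz--Hausdorff theorem this numerical range is convex, so $\tr(A^*BP)$ itself belongs to it; hence there is a unit vector $\phi\in N$ with $\langle A^*B\phi,\phi\rangle=\tr(A^*BP)$. This $\phi$ is a unit eigenvector satisfying $A^*A\phi=\|A\|^2\phi$ and $|\langle A^*B\phi,\phi\rangle|=|\tr(A^*BP)|\leqslant\epsilon\|A\|\|B\|$, completing the proof. The only points requiring care are confirming that $T$ maps $N$ into $N$ so that its numerical range is the correct convex set, and that the vector furnished by Toeplitz--Hausdorff is automatically a unit vector inside $N$.
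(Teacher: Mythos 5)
Your proposal is correct and is essentially the paper's own argument in density-matrix clothing: the paper specializes the subdifferential condition \eqref{minimal_sub_equn1} via Corollary \ref{subdiff_comp_1} to $\mathcal{W}=\operatorname{span}\{B\}$ and removes the convex hull by the Toeplitz--Hausdorff theorem applied to $A^*B$ restricted to the eigenspace $\ker(A^*A-\|A\|^2I)$, which is exactly your crux step, since $\tr(A^*BP)=\|A\|\sum_j \alpha_j f_{\phi_j}(B)$ identifies your collapse of $P$ to a single eigenvector with the paper's collapse of the convex hull $\mathrm{conv}\{f_\phi(B)\}$ to the set itself. Routing through Theorem \ref{epsilonBirsub_BHK} instead of directly through \eqref{minimal_sub_equn1} is a harmless repackaging (that theorem is itself proved from the same ingredients, so there is no circularity), and your two points of care are easily settled: $A^*AP=\|A\|^2P$ forces $\operatorname{ran}P\subseteq\ker(A^*A-\|A\|^2I)$, so the spectral eigenvectors of $P$ with nonzero weight lie in that eigenspace, and the compression $P_NA^*B|_N$ maps $N$ into $N$ by construction.
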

	\begin{proof}
		By equation~\eqref{minimal_sub_equn1} and Corollary \ref{subdiff_comp_1}, we obtain for $\mathcal{W}=\operatorname{span}\{B\}$ that $A\perp_B^\epsilon B$ if and only if
		
		\begin{equation}\notag  
			\begin{aligned}
				0 & \in
				2\|A\|\mathrm{conv}\{f_{\phi}(B) : \|\phi\|=1,\, A^*A\phi=\|A\|^2\phi\} + 2\epsilon \|A\|\|B\| \mathbb{D}.\\
			\end{aligned} 
		\end{equation} 
	Note that $\{f_{\phi}(B) : \|\phi\|=1,\, A^*A\phi=\|A\|^2\phi\}= \left\{\frac{1}{\|A\|}\langle  A^*B\phi,\ \phi \rangle : \|\phi\|=1, \,A^*A\phi=\|A\|^2\phi \right\}.$ By Toeplitz-Hausdorff theorem, the numerical range of the operator $\frac{1}{\|A\|}A^* B$ restricted to a closed subspace is convex. Thus, we get $A\perp_B^\epsilon B$ if and only if $$0\in  \Big\{\langle  A^*B\phi,\ \phi \rangle : \|\phi\|=1, \,A^*A\phi=\|A\|^2\phi \Big\} + \epsilon \|A\|\|B\| \mathbb{D}.$$
	This gives the required result.
		
	\end{proof}

 Theorem 3.1 of \cite{ACCF} gives a characterization of $\epsilon$-Birkhoff orthogonality in $\mathcal {B(H)}$. We extend it to $\mathcal {B(H, K)}$. For that we need the following lemma, which is similar to Proposition \ref{KeB1}.
\begin{lemma}\label{epsilon_lemma}
	Let $A, B \in \mathcal{B(H,K)}.$ Let $\epsilon\in[0,1)$. Then 
	$A\perp_B^\epsilon B$ if and only if for each $\theta\in[0,2\pi)$
	$$\inf\limits_{\delta>0} \sup\limits_{\phi\in H_{\delta}(A), \|\phi\|=1}  \mathrm{Re} (e^{i\theta} \langle  B\phi,\ A\phi\rangle)\geqslant -\epsilon\|A\|\|B\|.$$
\end{lemma}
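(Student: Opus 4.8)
The plan is to mirror the proof of Proposition~\ref{KeB1}: I reduce the quadratic $\epsilon$-orthogonality inequality to a one-sided derivative condition along each complex direction $e^{i\theta}$, and then substitute the expression from Theorem~\ref{D2}. Throughout I assume $A\neq 0$ (if $A=0$, the defining inequality $\|\lambda B\|^2\geqslant 0$ holds trivially for every $\lambda$, and the right-hand condition is vacuous). First I would rewrite $A\perp_B^\epsilon B$ in polar form. Writing $\lambda=te^{i\theta}$ with $t\geqslant 0$ and $\theta\in[0,2\pi)$, and using $\|\lambda B\|=t\|B\|$, the condition $\|A+\lambda B\|^2\geqslant \|A\|^2-2\epsilon\|A\|\|\lambda B\|$ for all $\lambda\in\mathbb{C}$ is equivalent to: for every $\theta\in[0,2\pi)$,
$$\psi_\theta(t):=\|A+te^{i\theta}B\|^2+2\epsilon\|A\|\|B\|\,t-\|A\|^2\geqslant 0\quad\text{for all }t\geqslant 0,$$
where $\psi_\theta(0)=0$.

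Next I would record that $\psi_\theta$ is convex on $[0,\infty)$: the map $t\mapsto\|A+te^{i\theta}B\|$ is convex (a norm of an affine function), hence its square is convex (composition with the increasing convex map $s\mapsto s^2$ on $[0,\infty)$), and adding the linear term $2\epsilon\|A\|\|B\|\,t$ preserves convexity. For a convex function on $[0,\infty)$ that vanishes at $0$, the nonnegativity $\psi_\theta(t)\geqslant 0$ for all $t\geqslant 0$ is equivalent to $\psi_\theta'(0^+)\geqslant 0$; this is the elementary monotonicity-of-the-derivative fact underlying \cite[Proposition 1.5]{K}, which I would either cite in this generalized form or prove in two lines (if the right derivative is negative then $\psi_\theta$ dips below $0$ for small $t$, while if it is nonnegative then convexity forces $\psi_\theta$ to be nondecreasing). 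Thus $A\perp_B^\epsilon B$ if and only if $\psi_\theta'(0^+)\geqslant 0$ for every $\theta$.

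It then remains to identify $\psi_\theta'(0^+)$. Factoring
$$\frac{\|A+te^{i\theta}B\|^2-\|A\|^2}{t}=\frac{\|A+te^{i\theta}B\|-\|A\|}{t}\,\bigl(\|A+te^{i\theta}B\|+\|A\|\bigr)$$
and letting $t\to 0^+$, the first factor tends to the right-hand derivative given by Theorem~\ref{D2} applied with $X=e^{i\theta}B$, while the second tends to $2\|A\|$. Using $\mathrm{Re}\langle e^{i\theta}B\phi,A\phi\rangle=\mathrm{Re}(e^{i\theta}\langle B\phi,A\phi\rangle)$, this yields
$$\psi_\theta'(0^+)=2\inf_{\delta>0}\sup_{\substack{\phi\in H_\delta(A)\\ \|\phi\|=1}}\mathrm{Re}(e^{i\theta}\langle B\phi,A\phi\rangle)+2\epsilon\|A\|\|B\|.$$
Hence $\psi_\theta'(0^+)\geqslant 0$ is precisely the inequality $\inf_{\delta>0}\sup_{\phi\in H_\delta(A),\,\|\phi\|=1}\mathrm{Re}(e^{i\theta}\langle B\phi,A\phi\rangle)\geqslant-\epsilon\|A\|\|B\|$, and quantifying over all $\theta\in[0,2\pi)$ gives the claimed equivalence.

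The computations are routine once this framework is set, so the only point requiring genuine care is the reverse implication: passing from the derivative inequality back to the full-range inequality $\psi_\theta(t)\geqslant 0$ for all $t\geqslant 0$ truly needs the convexity of the \emph{squared} norm $\psi_\theta$ (not merely of the norm), together with the one-sided chain rule used to differentiate $\|A+te^{i\theta}B\|^2$ at $t=0^+$. I expect this convexity-to-nonnegativity step to be the main thing to justify cleanly.
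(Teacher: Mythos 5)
Your proof is correct, and it shares the paper's two-step skeleton --- reduce $A\perp_B^\epsilon B$ to a one-sided derivative inequality along each ray $t\mapsto A+te^{i\theta}B$, then identify that derivative via Theorem~\ref{D2} and the difference-of-squares factorization --- but it obtains the reduction differently. The paper simply cites \cite[Theorem 2.2]{ACCF}, which asserts that $A\perp_B^\epsilon B$ if and only if $\lim_{t\to 0^+}\bigl(\|A+te^{i\theta}B\|^2-\|A\|^2\bigr)/(2t)\geqslant -\epsilon\|A\|\|B\|$ for every $\theta\in[0,2\pi)$, and then substitutes the expression from Theorem~\ref{D2}. You instead prove this equivalence from scratch: $\psi_\theta(t)=\|A+te^{i\theta}B\|^2+2\epsilon\|A\|\|B\|\,t-\|A\|^2$ is convex on $[0,\infty)$ (square of a nonnegative convex function, plus a linear term) with $\psi_\theta(0)=0$, so by monotonicity of difference quotients its nonnegativity on $[0,\infty)$ is equivalent to $\psi_\theta'(0^+)\geqslant 0$; both directions of this elementary step are sound, and your identification of $\psi_\theta'(0^+)$ matches the paper's computation. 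What your route buys is a self-contained argument in the spirit of \cite[Proposition 1.5]{K} as invoked for Proposition~\ref{KeB1}, at the cost of a few extra lines; it also makes explicit the case $A=0$, which the paper's proof leaves tacit even though Theorem~\ref{D2} is stated only for $A\neq 0$. One cosmetic slip: for $A=0$ the displayed condition of the lemma is not vacuous but holds with equality (since $H_\delta(0)=\mathcal H$ and both sides equal $0$), which does not affect the equivalence.
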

\begin{proof}
	From \cite[Theorem 2.2]{ACCF}, $A\perp_B^\epsilon B$ if and only if for all $\theta\in[0,2\pi)$ 
	$$\lim_{t \to 0^+} \frac{\|A + t e^{i\theta} B\|^2 - \|A\|^2}{2t}\geqslant-\epsilon \|A\| \| B\|.$$
	Also, by Theorem \ref{D2}, we have for $\theta\in[0,2\pi)$
	\begin{align*}
		\lim_{t \to 0^+} \frac{\|A + t e^{i\theta} B\|^2 - \|A\|^2}{2t}
		&=\inf\limits_{\delta > 0} \sup\limits_{\substack{\phi \in H_{\delta}(A) \\ \|\phi\| = 1}} 
		 \mathrm{Re} \left( e^{i\theta} \langle B \phi,\ A\phi \rangle \right) .
	\end{align*}
	Hence, the required result follows.
\end{proof}

\begin{theorem}\label{epsilon_in_Bhk}
	Let $A,B\in \mathcal{B(H,K)}.$ Let $\epsilon\in[0,1)$. Then $A\perp_B^\epsilon B$ if and only if for each $\theta\in[0,2\pi)$, there exists a sequence $\phi_n\in\mathcal{H},$ with $\|\phi_n\|=1$ for all $n\in\N,$ such that $$\|A\phi_n\|\to\|A\|\text{ and }\lim_{n\to\infty} \mathrm{Re}(e^{i \theta} \langle A^* B\phi_{n},\ \phi_{n} \rangle)\geqslant -\epsilon \|A\|\|B\|.$$
\end{theorem}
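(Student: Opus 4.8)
The plan is to reduce the statement of Theorem \ref{epsilon_in_Bhk} to the criterion already packaged in Lemma \ref{epsilon_lemma}. That lemma characterizes $A\perp_B^\epsilon B$ by the condition that for each $\theta\in[0,2\pi)$ one has
$$\inf\limits_{\delta>0} \sup\limits_{\phi\in H_{\delta}(A), \|\phi\|=1}  \mathrm{Re} (e^{i\theta} \langle  B\phi,\ A\phi\rangle)\geqslant -\epsilon\|A\|\|B\|,$$
so the entire content of the theorem is to show that, fixing $\theta$, this inner ``$\inf_\delta\sup_\phi$'' quantity being $\geqslant -\epsilon\|A\|\|B\|$ is equivalent to the existence of a \emph{sequence} $(\phi_n)$ of unit vectors with $\|A\phi_n\|\to\|A\|$ and $\limsup$ (or $\liminf$, suitably read) of $\mathrm{Re}(e^{i\theta}\langle A^*B\phi_n,\phi_n\rangle)$ at least $-\epsilon\|A\|\|B\|$. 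Note that $\langle B\phi,A\phi\rangle=\langle A^*B\phi,\phi\rangle$, so the two bracket expressions agree; the real work is purely about translating the order of the $\inf_\delta\sup_\phi$ into a diagonal sequence.

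First I would handle the direction that produces the sequence. Assume the $\inf_\delta\sup_\phi$ bound holds. Since the sets $H_\delta(A)$ are nested as $\delta\to 0^+$, the supremum over $\phi\in H_\delta(A)$ is nondecreasing in $\delta$, hence the infimum over $\delta>0$ equals $\lim_{\delta\to 0^+}\sup_{\phi\in H_\delta(A)}$. Thus for the sequence $\delta=1/n$ we have
$$\lim_{n\to\infty}\ \sup_{\substack{\phi\in H_{1/n}(A)\\ \|\phi\|=1}}\mathrm{Re}(e^{i\theta}\langle A^*B\phi,\phi\rangle)\ \geqslant\ -\epsilon\|A\|\|B\|.$$
For each $n$ I pick $\phi_n\in H_{1/n}(A)$ with $\|\phi_n\|=1$ realizing the supremum up to an error of $1/n$, that is, $\mathrm{Re}(e^{i\theta}\langle A^*B\phi_n,\phi_n\rangle)\geqslant \sup_{\phi\in H_{1/n}(A)}\mathrm{Re}(e^{i\theta}\langle A^*B\phi,\phi\rangle)-\tfrac1n$. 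By the definition of $H_{1/n}(A)=E_{A^*A}[\|A\|^2-\tfrac1n,\|A\|^2]$ we automatically get $\|A\|^2-\tfrac1n\leqslant \langle A^*A\phi_n,\phi_n\rangle\leqslant\|A\|^2$, so $\|A\phi_n\|^2\to\|A\|^2$ and hence $\|A\phi_n\|\to\|A\|$. Taking $\liminf_{n\to\infty}$ of the chosen lower bound and using the displayed limit yields $\liminf_n \mathrm{Re}(e^{i\theta}\langle A^*B\phi_n,\phi_n\rangle)\geqslant -\epsilon\|A\|\|B\|$, which gives the required sequence (passing to a subsequence to turn the $\liminf$ into an honest limit, if the exact $\lim$ formulation is desired).

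Conversely, suppose for fixed $\theta$ such a sequence $(\phi_n)$ exists. The key observation is that $\|A\phi_n\|\to\|A\|$ forces $\phi_n$ to eventually lie in every spectral window: given $\delta>0$, since $\langle A^*A\phi_n,\phi_n\rangle=\|A\phi_n\|^2\to\|A\|^2$, for all large $n$ one has $\langle A^*A\phi_n,\phi_n\rangle\geqslant\|A\|^2-\delta$, and I would argue that this places (a suitable truncation of) $\phi_n$ into $H_\delta(A)$ up to a vanishing error, so that $\sup_{\phi\in H_\delta(A)}\mathrm{Re}(e^{i\theta}\langle A^*B\phi,\phi\rangle)\geqslant \mathrm{Re}(e^{i\theta}\langle A^*B\phi_n,\phi_n\rangle)-o(1)$. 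Taking $n\to\infty$ gives the $\sup_\phi$ bound for each fixed $\delta$, and then taking $\inf_\delta$ preserves the inequality, recovering the Lemma \ref{epsilon_lemma} condition and hence $A\perp_B^\epsilon B$. The main obstacle is precisely this truncation step: a unit vector $\phi_n$ with $\langle A^*A\phi_n,\phi_n\rangle\geqslant\|A\|^2-\delta$ need not itself lie in the spectral subspace $H_\delta(A)$, since it may carry a small component in the orthogonal complement $E_{A^*A}[0,\|A\|^2-\delta)$. I would control this by writing $\phi_n=\psi_n+\chi_n$ with $\psi_n=H_\delta(A)\phi_n$ and $\chi_n\perp\psi_n$, and using the spectral bound $\langle A^*A\phi_n,\phi_n\rangle\geqslant\|A\|^2-\delta$ together with $A^*A\leqslant\|A\|^2 I$ to conclude that $\|\chi_n\|^2$ is $O(\delta)$ uniformly in $n$; then $\langle A^*B\psi_n,\psi_n\rangle$ differs from $\langle A^*B\phi_n,\phi_n\rangle$ by a term bounded by a constant times $\|A^*B\|\,\|\chi_n\|$, which is small with $\delta$, and the normalized vector $\psi_n/\|\psi_n\|$ is a legitimate competitor in $H_\delta(A)$. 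Carrying the error estimates through the order of limits (first $n\to\infty$, then $\delta\to 0^+$) is the delicate bookkeeping, but it is routine once the truncation decomposition is in place.
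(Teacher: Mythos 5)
Your forward direction is essentially the paper's: both invoke Lemma \ref{epsilon_lemma}, take $\delta_n=\tfrac1n$, choose near-maximizing unit vectors $\phi_n\in H_{1/n}(A)$, note that membership in $H_{1/n}(A)$ forces $\|A\phi_n\|^2\geqslant\|A\|^2-\tfrac1n$, and pass to a subsequence to get an honest limit. The converse is where you diverge from the paper, and where your write-up has a concrete error. The paper never returns to Lemma \ref{epsilon_lemma} for this direction: it verifies the defining inequality of $\perp_B^\epsilon$ directly by writing $\lambda=|\lambda|e^{i\theta}$ and bounding $\|A+\lambda B\|^2\geqslant\|(A+\lambda B)\phi_n\|^2\geqslant\|A\phi_n\|^2+2|\lambda|\,\mathrm{Re}\left(e^{i\theta}\langle B\phi_n,A\phi_n\rangle\right)$, dropping the nonnegative $|\lambda|^2\|B\phi_n\|^2$ term and letting $n\to\infty$. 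This is three lines and uses no spectral theory at all.

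Your converse instead routes back through the $\inf_\delta\sup_\phi$ criterion, which forces the truncation argument you describe, and the key estimate there is false as stated. From $\langle A^*A\phi_n,\phi_n\rangle\geqslant\|A\|^2-\delta$ alone you cannot conclude that $\chi_n=\phi_n-H_\delta(A)\phi_n$ satisfies $\|\chi_n\|^2=O(\delta)$: the spectral bound actually reads $\delta\|\chi_n\|^2\leqslant\|A\|^2-\langle A^*A\phi_n,\phi_n\rangle$, so a defect of size $\delta$ yields only the useless bound $\|\chi_n\|^2\leqslant 1$. Concretely, if $A^*A$ has eigenvalues $1$ and $1-2\delta$ with eigenvectors $e_1,e_2$, then $\phi=\tfrac{1}{\sqrt2}(e_1+e_2)$ has defect exactly $\delta$ while $\|\chi\|^2=\tfrac12$. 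The argument is salvageable because your hypothesis is stronger than a $\delta$-sized defect: for \emph{fixed} $\delta$, the convergence $\|A\phi_n\|^2\to\|A\|^2$ gives $\|\chi_n\|^2\leqslant\left(\|A\|^2-\|A\phi_n\|^2\right)/\delta\to 0$ as $n\to\infty$, so $\psi_n/\|\psi_n\|$ is an admissible unit vector in $H_\delta(A)$ whose form value differs from $\mathrm{Re}\left(e^{i\theta}\langle A^*B\phi_n,\phi_n\rangle\right)$ by $o(1)$; letting $n\to\infty$ for each fixed $\delta$ and then taking $\inf_\delta$ recovers the lemma's condition in exactly the order of limits you prescribe. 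So your plan does close, but only with the corrected estimate, and the paper's direct computation shows the entire truncation apparatus is unnecessary for this direction.
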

\begin{proof}
	Suppose $A \perp_B^\epsilon B.$ For $n\in\N,$ let $\delta_n=\frac{1}{n}.$ Then, by Lemma~\ref{epsilon_lemma}, for each $n\in\N$ and for each $\theta\in[0,2\pi)$, there exists $\phi_n \in H_{\delta_n}(A)$ with $\|\phi_n\| = 1$ such that
	$$\|A \phi_n\| \to \|A\|\text{ as }n\to\infty \quad \text{and} \quad \lim_{n \to \infty} \mathrm{Re} \left( e^{i \theta} \langle A^* B\phi_n,\ \phi_n \rangle \right) \geqslant -\epsilon \|A\| \|B\|.$$
	(If the sequence does not converge, we can consider a convergent subsequence.)\\
	Conversely, for $\lambda=|\lambda|e^{i \theta} \in\C$
	\begin{align*}
		\|A+\lambda B\|^2&\geqslant \|(A+\lambda B)\phi_n\|^2\\
		&=\left(\|A\phi_n\|^2+2|\lambda|\mathrm{Re}(e^{i \theta}\langle B\phi_n,A\phi_n\rangle)+|\lambda|^2\|B\phi_n\|^2\right)\\
		&\geqslant\|A\phi_n\|^2+2|\lambda|\mathrm{Re}(e^{i \theta}\langle B\phi_n,A\phi_n\rangle).
	\end{align*}
	Taking limit as $n\to\infty,$ we obtain
	$\|A+\lambda B\|^2\geqslant \|A\|^2-2\epsilon \|A\|\|\lambda B\|.$ Since $\lambda\in\C$ was arbitrary, it follows $A\perp_B^\epsilon B.$
\end{proof}

\section*{Acknowledgments}
 The author P. Grover is supported by a SERB Core Research Grant CRG/ $2023/000595$ funded by the Anusandhan National Research Foundation (ANRF), India. A part of this work was done when S. Seal was visiting Shiv Nadar Institution of Eminence, Delhi NCR. She would like to express her deep gratitude for the warm hospitality provided during her visit. Her research was financially supported by the institute Postdoctoral Fellowship of NISER Bhubaneswar.




\section*{ }
\bibliographystyle{acm}

\end{document}